\newtheorem{thm}{Theorem}[section]
\newtheorem{pro}[thm]{Proposition}
\theoremstyle{definition}
\theoremstyle{remark}
\numberwithin{equation}{section}
\newcommand{\com}[1]{ { } }
\begin{document}
\title[Data-driven Fokker-Planck solver]{An efficient data-driven solver for
Fokker-Planck equations: algorithm and analysis.}
\author{Matthew Dobson}
\address{Matthew Dobson: Department of Mathematics and Statistics, University
  of Massachusetts Amherst, Amherst, MA, 01002, USA}
\email{dobson@math.umass.edu}
\author{Yao Li}
\address{Yao Li: Department of Mathematics and Statistics, University
  of Massachusetts Amherst, Amherst, MA, 01002, USA}
\email{yaoli@math.umass.edu}
\author{Jiayu Zhai}
\address{Jiayu Zhai: Department of Mathematics and Statistics, University
  of Massachusetts Amherst, Amherst, MA, 01002, USA}
\email{zhai@math.umass.edu}

\thanks{Yao Li is partially supported by NSF DMS-1813246.}

\keywords{Fokker-Planck equation, Monte Carlo simulation, data-driven
  method}

\begin{abstract}
  Computing the invariant probability measure of a randomly perturbed
  dynamical system usually means solving the stationary Fokker-Planck equation. This paper studies several key properties of a novel
  data-driven solver for low-dimensional Fokker-Planck equations proposed in
  \cite{li2018data}. Based on these results, we propose a new ``block
  solver'' for the stationary Fokker-Planck equation, which
  significantly improves the performance of the original
  algorithm. Some possible ways of reducing numerical artifacts caused
  by the block solver are discussed and tested with examples. 
\end{abstract}
\maketitle
\section{Introduction}
Random perturbations to deterministic dynamical systems are ubiquitous in models used in physics, biology and
engineering. The steady state of a randomly perturbed dynamical system is of critical interest in the study
of these physical, biological or chemical systems and their
applications. From a dynamical systems point of view, the
  interplay of dynamics and noise is both interesting and
  challenging, especially if the underlying dynamics is
  chaotic. Characteristics of the steady state distribution also help
  us to understand asymptotic effects of random perturbations to
  deterministic dynamics.

 The evolution of the probability density function of a randomly perturbed
system is described by the Fokker-Planck equation \cite{risken1996fokker}. Consider a stochastic dynamical system 
\begin{equation}
\label{SDE}
  \mathrm{d}X_{t} = f(X_{t}) \mathrm{d}t + \sigma(X_{t})
  \mathrm{d}W_{t} \,,
\end{equation}
where $f$ is a vector field in $\mathbb{R}^{n}$, $\sigma$ is a coefficient
matrix, and $\mathrm{d}W_{t}$ is an $n$-dimensional white noise. The
corresponding Fokker-Planck equation, which is also known as the
Kolmogorov forward equation, is
\begin{equation}
\label{FPE}
u_t = \mathcal{L}u = -\sum_{i = 1}^{n} (f_{i}u)_{x_{i}} +
  \frac{1}{2}\sum_{i,j = 1}^{n}(D_{i,j}u)_{x_{i} x_{j}} \,,
\end{equation}
where $D = \sigma^{T}\sigma$, $u(x,t)$ denotes the probability density at time $t$, and subscripts 
$t$ and $x_i$ denote partial derivatives. In this paper, we focus on
the invariant probability measure of \eqref{SDE}, whose density
function satisfies the stationary Fokker-Planck equation
$$\mathcal{L}u = 0 \qquad \int_\Omega u \, dx = 1.$$
Detailed assumptions about equation \eqref{SDE} and \eqref{FPE} will
be given in Section \ref{Algorithm description}.

For Langevin dynamics, the invariant probability measure is given by the Gibbs distribution
which can be computed up to the unknown normalizing constant; however, in general,
the Fokker-Planck equation can not be solved
analytically. Rigorous estimations of the invariant probability density function are
challenging as well. Most known results are proved by large deviations
techniques \cite{freidlin1998random}, which unfortunately only shows
tail properties when the noise is asymptotically small. Some concentration properties
of the invariant probability measure can be proved by assuming some
dissipative conditions. For example, it was shown in
\cite{li2016systematic, day1985some} that such
concentration in the vicinity of a strong attractor is ``Gaussian-like''. However, these theoretical results can rarely
give a satisfactory quantitative description of the invariant
probability measure. Therefore, numerical solution techniques are
necessary to further study these randomly perturbed dynamical
systems. Numerically solving a steady state Fokker-Planck equation in
an unbounded domain is nontrivial. And additional challenges are presented in systems with high 
dimensionality, chaotic underlying dynamics, and multiscale
coefficient terms.

%A practical settlement that is somewhat dimension-free is a Monte Carlo simulation. It is efficient when we want to obtain the statistical quantities such as expectation and variance of the distributions. However, if we need to approximate the density function, especially locally, in an accurate way, the noisy oscillation of the simulated data becomes an obstacle. 

One difficulty of solving the Fokker-Planck equation numerically is
the conflict between the need for high-resolution local solutions and the
necessity to handle large spatial domains. On
one hand, in many applications, what we need is a high-resolution
local numerical solution. It is known that the invariant probability
measure tends to concentrate at the vicinity of the global
attractor, and for such systems we are interested in the
distribution in a local region of the phase space. In addition, if the strength of noise $\sigma$ is
small, it is proved that the probability density function is
concentrated in an $O(\sigma)$ neighborhoods of the global attractor\cite{li2016systematic}. So in
order to obtain a meaningful solution and to avoid numerical
artifacts, the grid size needs to be small enough. On the other hand, the Fokker-Planck equation in $\mathbb{R}^{d}$ is defined
on an unbounded domain with zero value at infinity. The lack
of a local boundary condition makes the problem computationally
challenging. The existing methods usually solve the Fokker-Planck equation in a region that is large enough to cover all attractors. 

%\dob{(NOTE: removed because it just  
%restates previous parts of this paragraph.)}

In \cite{li2018data}, a hybrid method is proposed to partially resolve the
difficulties. The method deals with the local Fokker-Planck equation
and completely removes the unknown boundary condition, which makes the
resultant linear system undetermined. To solve this
underdetermined problem, Monte-Carlo
simulation is used to provide a reference solution for the numerical
solver (finite difference or finite element). The reference solution
itself has low accuracy and lots of undesired fluctuations. The algorithm then projects the
``noisy'' reference solution onto the kernel of the discretized numerical solver. This
minimizes the distance between the collection of possible numerical
solutions (without knowing the boundary condition) and the
reference solution from the Monte Carlo
simulation. This method can solve the problem in any local area even
if it doesn't cover any attractor. It also smooths the oscillation
caused by the Monte Carlo sampling. By reducing the computational cost
from non-locality, it can provide a high resolution solution in a
local area.

Paper \cite{li2018data} only introduced the algorithm without
proof. Analysis of this algorithm is carried out in this paper. We proved that the
hybrid method introduced in \cite{li2018data} can significantly reduce the
error of the reference solution produced by the Monte Carlo
simulation. The heuristic reason is that the error term of this random reference
solution is very close to an i.i.d. random vector. The expected
norm of this random vector is dramatically reduced when projecting it
to a lower dimensional subspace. In addition, we use a combination of rigorous analysis
and numerical computations to show that the error term of this projection
concentrates on the boundary of the numerical domain. In other words,
the empirical performance of the hybrid algorithm is actually much
better than what can be rigorously proved.

The other goal of this paper is to improve the performance of this hybrid method by
introducing block solvers. This improvement is motivated by the locality of
the hybrid method. Since the hybrid method does not rely on local boundary
conditions, we can divide the numerical domain into a large number of small
blocks and apply the hybrid method to each block. The global solution is
a collage of local solutions on these blocks. This divide-and-conquer
strategy is very efficient. Consider a $d$-dimensional problems with
$N$ grid points in each dimension. The classical numerical PDE solver
needs to solve a large linear system with $N^{d}$ variables. Assume
the cost of solving a linear system with $n$ variables is
$O(n^{p})$. Then the total cost is $O(N^{dp})$, which is considerably
large if for instance $d = 3$ and $N =  1000$. However, if we divide the grid into many blocks with only $m$
grid points in each dimension. The total cost of solving the
Fokker-Planck equation on $(N/m)^{d}$ blocks becomes
$m^{pd}\times (N/m)^{d} = m^{(p-1)d}N^{d}$. Empirically $m$ can be as small
as $20-30$. This dramatically reduces the total computational cost,
unless the linear solver can achieve a linear complexity (which usually does not happen). In
addition, parallelizing these block solvers is much easier than computing
a large linear system in parallel. Instead of a local solution in a
small subset of the phase space demonstrated in \cite{li2018data}, the
block solver now allows us to compute the full invariant probability
density function of 3D or 4D systems, as demonstrated later in this paper. 
 
The idea of using local blocks is supported by our analytical results in
  the first half of this paper. Theoretically, using larger blocks gives
  better reduction of error terms from Monte Carlo simulations, as
  proved in Theorem \ref{proj}. But
  the analysis in this paper shows that the error tends to concentrate at the
  boundary of blocks. Hence the size of blocks needs not to be very
  large to make the accuracy of solutions in the interior of blocks
  acceptable. And the error on the boundary can be repaired by
  algorithms. Since the error of numerical solution mainly
  concentrates on the boundary, a naive block solver has visible
  interface errors between blocks. We then develop methods to reduce this interface
  error. Two different approaches, namely the overlapping
blocks method and the shifting blocks method, are introduced and
tested with several examples.

In this paper, we mainly consider low-dimensional systems up to
dimension 3 or 4, where traditional grid-based numerical methods still
work. For systems in much higher dimensions, all traditional grid-based methods of
solving the Fokker-Planck equation, such as finite difference method or finite elements method, are not feasible
any more. Direct Monte Carlo simulation also greatly suffers from the
curse-of-dimensionality.  There are several techniques
introduced to deal with certain multi-dimensional Fokker-Planck equations, such as
the truncated asymptotic expansion, splitting method, orthogonal
functions, and tensor decompositions \cite{er2011methodology,
  er2012state, majda2001mathematical, von2000calculation, sun2015numerical}. In particular, \cite{chen2017beating,
  chen2018efficient}
introduced an efficient technique for a class of high-dimensional
dynamical systems. In the future, we will incorporate these high-dimensional
sampling techniques to the mesh-free version of this hybrid
algorithm.

The rest of this paper is organized as follows. In Section
\ref{Analysis of data-driven Fokker-Planck solver}, we review the
hybrid method in \cite{li2018data} and rigorously analyze the convergence of
the method. We also show that the error will concentrate on the
boundary of the domain. A directed block solver in
proposed in Section \ref{Block Fokker-Planck Solver}. Two possible methods to
repair interface error between blocks are studied in Section \ref{Reducing
  Interface Error}. In Section \ref{Numerical Examples}, we use three
example systems to test our algorithms and error reduction
methods. Section \ref{conclusion} is the conclusion.

\section{Analysis of data-driven Fokker-Planck solver}\label{Analysis of data-driven Fokker-Planck solver}
\subsection{Algorithm description}\label{Algorithm description}
Consider a stochastic differential equation
\begin{equation}
\label{SDE1}
  \mathrm{d}X_{t} = f(X_{t}) \mathrm{d}t + \sigma(X_{t})
  \mathrm{d}W_{t} \,,
\end{equation}
where $X_{t} \in \mathbb{R}^{n}$, $f : \mathbb{R}^{n} \rightarrow
\mathbb{R}^{n}$ is a continuous vector field, $\sigma$ is a $n\times
n$ matrix-valued function, and $\mathrm{d}W_{t}$ is the white noise in
$\mathbb{R}^{n}$. We assume that $f$ and $\sigma$ has enough
regularity such that equation \eqref{SDE1} admits a unique solution
$X_{t}$ that is a Markov process with a transition kernel
$P^{t}(x, \cdot)$. Similar as in \cite{li2018data}, we further assume that $X_{t}$
admits a unique invariant probability measure $\pi$ such that 
$$
  \pi P^{t}(A) = \int_{\mathbb{R}^{n} } P^{t}(x, A) \pi( \mathrm{d}x)
$$
for any measurable set $A$. In addition, we assume $\pi$ is absolutely
continuous with respect to the Lebesgue measure, and $P^{t}(x, \cdot)$
converges to $\pi$ for any $x \in \mathbb{R}^{n}$. We refer
\cite{oksendal2003stochastic, karatzas2012brownian, zeeman1988stability, bogachev2001generalization,
  huang2015steady, khasminskii2011stochastic, meyn2012markov, hairer2006ergodicity}
for the detailed conditions that lead to the existence of solutions of
\eqref{SDE1}, the existence of an invariant probability measure, and the
convergence to the invariant probability measure.

Let $u$ be the probability density function of $\pi$. It is well known
that $u$ satisfies the stationary Fokker-Planck equation
\begin{equation}\label{F-P}
0 = \mathcal{L}u = -\sum_{i = 1}^{n} (f_{i}u)_{x_{i}} +
  \frac{1}{2}\sum_{i,j = 1}^{n}(D_{i,j}u)_{x_{i} x_{j}} \,,
\end{equation}
where $D = \sigma^{T}\sigma$. In addition, because of the convergence,
we have
$$
  u(x) = \lim_{T \rightarrow \infty}\frac{1}{T}\int_{0}^{T} u(x,t)
  \mathrm{d}t \,,
$$
where $u(x,t)$ is the probability density function of $X_{t}$.

For the sake of simplicity we assume $n = 2$ when introducing the
algorithm. But our algorithm works for any dimension. Now assume
that we would like to solve $u$ numerically on a 2D domain $D = [a_{0},
b_{0}] \times [a_{1}, b_{1}]$. To do this, an $N \times M$ grid is constructed on
$D$ with grid size $h = (b_{0} - a_{0})/N = (b_{1} - a_{1})/M$. Since
$u$ is the density function, we approximate it at the center of each
of the $N \times M$ boxes $\{O_{i,j}\}_{i =
  1, j = 1}^{i = N, j = M}$ with $O_{i,j} = [a_{0} + (i-1)h, a_{0} +
ih] \times [a_{1} + (j-1)h, a_{1} + jh]$. Let
$\mathbf{u} = \{u_{i,j}\}_{i = 1, j = 1}^{i = N, j = M}$ be this
numerical solution on $D$ that we are interested in. $\mathbf{u}$ can
be considered as a vector in $\mathbb{R}^{NM}$. Throughout this paper,
we still denote this
vector by $\mathbf{u}$ when it does not lead to confusion. An entry of
$\mathbf{u}$, denoted by $u_{i,j}$, approximates the probability
density function $u$ at the center of the $(i,j)$-box with coordinate
$(ih + a_{0} - h/2, jh + a_{1} - h/2)$. Now, 
we consider $u$ as the solution to the boundary-free PDE
\eqref{F-P} 
and discretize the operator $\mathcal{L}$ on $D$ with respect to all
$(N-2)(M-2)$ interior boxes. The
discretization of the Fokker-Planck equation with respect to each center point gives a
linear relation among $\{u_{i,j}\}$. This produces a linear constraint
for $\mathbf{u}$, denoted as 
$$
  \mathbf{A} \mathbf{u} = \mathbf{0} \,,
$$
where $\mathbf{A}$ is an $(N-2)(M-2) \times (NM)$ matrix. $\mathbf{A}$
is said to be the discretized Fokker-Planck operator. 

Then we need the Monte Carlo simulation to produce a reference
  solution. Let $\{\mathbf{X}_{n}\}_{n = 1}^{\mathbf{N}}$ be a long
numerical trajectory of the time-$\delta$ sample chain of $X_{t}$,
i.e., $\mathbf{X}_{n} = X_{n \delta}$, where $\delta > 0$ is the time step
size of the Monte Carlo simulation. Let $\mathbf{v}
= \{v_{i,j}\}_{i = 1, j = 1}^{i = N, j = M}$ such that
$$
  v_{i,j} = \frac{1}{\mathbf{N} h^{2}} \sum_{n = 1}^{\mathbf{N}}
  \mathbf{1}_{O_{i,j}}(X_{n}) \,.
$$ 
It follows from the ergodicity of \eqref{SDE1} that $\mathbf{v}$ is an
approximate solution of \eqref{F-P}. Again, we denote the $N\times M$
vector reshaped from $\mathbf{v}$ by $\mathbf{v}$ as well. 

As introduced in \cite{li2018data}, we look for the solution of the
following optimization problem
\begin{eqnarray}
\label{opt}
 \mbox{min} &  & \| \mathbf{u} - \mathbf{v} \|_{2} \\\nonumber
\mbox{subject to } & & \mathbf{A} \mathbf{u} = \mathbf{0}  \,.
\end{eqnarray}
This is called the least norm problem. Vector
\begin{equation}\label{sol_opt}
  \mathbf{u} = \mathbf{A}^{T}( \mathbf{A}
  \mathbf{A}^{T})^{-1}(-\mathbf{A} \mathbf{v})+\mathbf{v}
\end{equation}
solves the optimization problem \eqref{opt}.

\subsection{Error analysis through projections}
The aim of this section is to show that the solution $\mathbf{u}$ to
the optimization problem \eqref{opt} is a good approximation of the global
analytical solution $u$ on $\mathbb{R}^2$. Let $\mathbf{u}^{\text{ext}} =
\{u^{\text{ext}}_{i,j}\} = u(ih + a_{0} - h/2, jh + a_{1} - h/2)$ be the values
of the exact solution $u$ at the centres of the boxes. We assume that
the Monte Carlo simulation produces an unbiased sample $\mathbf{v}$
that approximates 
$\mathbf{u}^{\text{ext}}$. We note that this assumption is usually not exactly
satisfied because the invariant probability measure of the numerical
scheme that produces $\{\mathbf{X}_{n}\}$ is only an approximation of $\pi$. We refer
\cite{lelievre2016partial} for known results about the difference between the two
invariant measures for Langevin dynamics and
\cite{mattingly2010convergence} for that of generic stochastic
differential equations. However, when $\mathbf{N}$ is large (at least $10^{7} \sim
10^{8}$ in our simulations), $\mathbf{v} - \mathbf{u}^{\text{ext}}$ is usually
``noisy'' enough to be treated as a vector of i.i.d. random
numbers. Improving the quality of sampling is extremely important to
this algorithm. We will address sampling methods in our subsequent work.

In order to make the
rigorous proof, we need the following assumption. 

{\bf (H)} 
\begin{itemize}
  \item[(a)] For $i=1, \dots, N, j=1,\dots,M$, $v_{i,j} - u^{\text{ext}}_{i,j}$ are
i.i.d random variables with expectation $0$ and variance $\zeta^{2}$.
\item[(b)] The finite difference scheme for equation \eqref{F-P} is
  convergent for the boundary value problem on $[a_{0}, b_{0}] \times
  [a_{1}, b_{1}]$ with $L^{\infty}$ error $O(h^{p})$.
\end{itemize}

The performance of the algorithm is measured by $h \mathbb{E}[
\|\mathbf{u} - \mathbf{u}^{\text{ext}}\|_{2}]$, which is the $L^{2}$
numerical integration of the error term. Before solving the
optimization problem \ref{opt}, we have $h \mathbb{E}[
\|\mathbf{v} - \mathbf{u}^{\text{ext}}\|_{2}] = O(\zeta h N) = O(\zeta)$.

\begin{thm}\label{proj}
Assume {\bf (H)} holds. We have the following bound for the $L^{2}$ error
$$
  h \mathbb{E}[\| \mathbf{u} - \mathbf{u}^{{\rm ext}}\|_{2}]\leq O(h^{1/2}\zeta) +
  O(h^{p}) \,.
$$
\end{thm}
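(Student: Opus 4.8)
The plan is to decompose the error $\mathbf{u} - \mathbf{u}^{\text{ext}}$ into a deterministic part coming from the finite-difference truncation error and a stochastic part coming from the Monte Carlo noise, and to bound each separately. Write $\mathbf{e} = \mathbf{v} - \mathbf{u}^{\text{ext}}$, so that by assumption (H)(a) the entries of $\mathbf{e}$ are i.i.d. with mean $0$ and variance $\zeta^2$. From the closed-form solution \eqref{sol_opt}, the optimizer is $\mathbf{u} = \mathbf{v} - \mathbf{A}^T(\mathbf{A}\mathbf{A}^T)^{-1}\mathbf{A}\mathbf{v} = P\,\mathbf{v}$, where $P = I - \mathbf{A}^T(\mathbf{A}\mathbf{A}^T)^{-1}\mathbf{A}$ is the orthogonal projection onto $\ker \mathbf{A}$. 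Hence $\mathbf{u} - \mathbf{u}^{\text{ext}} = P\mathbf{v} - \mathbf{u}^{\text{ext}} = P\mathbf{e} + (P\mathbf{u}^{\text{ext}} - \mathbf{u}^{\text{ext}}) = P\mathbf{e} - (I-P)\mathbf{u}^{\text{ext}}$. By the triangle inequality,
$$
h\,\mathbb{E}\big[\|\mathbf{u}-\mathbf{u}^{\text{ext}}\|_2\big] \le h\,\mathbb{E}\big[\|P\mathbf{e}\|_2\big] + h\,\|(I-P)\mathbf{u}^{\text{ext}}\|_2 \,.
$$

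For the first (stochastic) term, the key point is that $P$ is an orthogonal projection onto a subspace of dimension $\dim\ker\mathbf{A} = NM - (N-2)(M-2) = O(N)$ (the codimension equals the number of interior equations). Since $\mathbb{E}[\|P\mathbf{e}\|_2^2] = \zeta^2 \,\mathrm{tr}(P) = \zeta^2 \dim\ker\mathbf{A} = O(\zeta^2 N)$, Jensen's inequality gives $\mathbb{E}[\|P\mathbf{e}\|_2] \le (\mathbb{E}[\|P\mathbf{e}\|_2^2])^{1/2} = O(\zeta N^{1/2})$, so $h\,\mathbb{E}[\|P\mathbf{e}\|_2] = O(\zeta h N^{1/2}) = O(\zeta (hN)^{1/2} h^{1/2}) = O(\zeta h^{1/2})$, using that $hN$ is the fixed domain width. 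This is exactly the claimed $\sqrt{h}$ gain over the unprojected bound $O(\zeta)$: projecting an i.i.d. noise vector onto a subspace of relative dimension $O(1/N)$ shrinks its expected squared norm by that same factor.

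For the second (deterministic) term, the idea is that $\mathbf{u}^{\text{ext}}$ very nearly lies in $\ker\mathbf{A}$: applying the discrete operator to the sampled exact solution yields only the local truncation error, i.e. $\|\mathbf{A}\mathbf{u}^{\text{ext}}\|_\infty = O(h^p)$ by the consistency half of (H)(b). One must convert this into a bound on $\|(I-P)\mathbf{u}^{\text{ext}}\|_2 = \|\mathbf{A}^T(\mathbf{A}\mathbf{A}^T)^{-1}\mathbf{A}\mathbf{u}^{\text{ext}}\|_2$. The clean way is to note that $(I-P)\mathbf{u}^{\text{ext}}$ is the minimum-norm vector $\mathbf{w}$ with $\mathbf{A}\mathbf{w} = \mathbf{A}\mathbf{u}^{\text{ext}}$; comparing it to any particular solution — e.g. the difference between $\mathbf{u}^{\text{ext}}$ and the exact discrete solution of the boundary value problem with boundary data read off from $\mathbf{u}^{\text{ext}}$ — and invoking the \emph{convergence} (stability) half of (H)(b), one gets $\|(I-P)\mathbf{u}^{\text{ext}}\|_2 \le \|\text{that difference}\|_2 = O(N \cdot h^p)$ in $\ell^2$, hence $h\|(I-P)\mathbf{u}^{\text{ext}}\|_2 = O(hN \cdot h^p) = O(h^p)$. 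Adding the two contributions yields $O(h^{1/2}\zeta) + O(h^p)$.

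The main obstacle is the deterministic term: passing from the pointwise consistency estimate $\|\mathbf{A}\mathbf{u}^{\text{ext}}\|_\infty = O(h^p)$ to a norm bound on $(I-P)\mathbf{u}^{\text{ext}}$ requires some form of discrete stability for the underdetermined system $\mathbf{A}$, and the natural route is precisely to borrow it from the well-posed boundary value problem in (H)(b) by exhibiting an explicit near-solution and using the minimum-norm characterization of $I-P$. Care is needed with the bookkeeping of the $N$- and $h$-powers (the $\ell^2$-to-$L^2$ conversion factor $h$, the $O(N)$ codimension, the $O(N^{1/2})$ or $O(N)$ norm inflations) so that the fixed-domain relation $hN = O(1)$ is used consistently; the stochastic term, by contrast, is a short and essentially self-contained trace computation.
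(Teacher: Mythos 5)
Your proof is correct and follows essentially the same route as the paper: write $\mathbf{u}=P\mathbf{v}$, split the error into the projection of the i.i.d.\ noise onto $\mathrm{Ker}(\mathbf{A})$ (whose $O(N)$ dimension plus $hN=O(1)$ gives the $O(h^{1/2}\zeta)$ term) and a deterministic term controlled by comparing $\mathbf{u}^{\text{ext}}$ with the discrete boundary-value solution supplied by assumption {\bf (H)}(b). Your trace identity $\mathbb{E}[\|P\mathbf{e}\|_2^2]=\zeta^2\,\mathrm{tr}(P)$ and the minimum-norm characterization of $(I-P)\mathbf{u}^{\text{ext}}$ are only slightly slicker packagings of the paper's orthonormal-basis expansion and of its triangle inequality through the auxiliary vector $\mathbf{u}^{\text{lin}}\in\mathrm{Ker}(\mathbf{A})$.
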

\begin{proof}
In order to proceed, we need an auxiliary vector that satisfies the
linear constraint in equation \eqref{opt}. Consider the Fokker-Planck
equation on the extended domain $\tilde{D} = [a_{0}-h, b_{0}+h] \times
[a_{1}-h, b_{1}+h]$ with boundary condition 
\begin{equation}\label{F-P2}
\left\{
\begin{array}{ll}
\mathcal{L}w=0 & (x,y)\in\tilde{D}\\
w(x,y)=u(x,y) & (x,y)\in\partial\tilde{D}
\end{array}
\right.,
\end{equation}
where $h$ is the mesh size. This problem is well-posed and has a
unique solution $u(x,y), (x,y)\in\tilde{D}$. 

Consider the discretization of \eqref{F-P2} by finite difference method. It is of the following form
$$
\left[\begin{matrix}
    \mathbf{A} & \mathbf{0} \\
    \mathbf{B} & \mathbf{C}
  \end{matrix}\right]
  \left[\begin{matrix}
    \mathbf{u}^{\text{lin}}\\
    \mathbf{u_{0}}
  \end{matrix}\right]
  =\left[\begin{matrix}
    \mathbf{0} \\
    \mathbf{0}
  \end{matrix}\right],
$$
where the extended equations
$$
\left[\begin{matrix}
    \mathbf{B} & \mathbf{C}
  \end{matrix}\right]
  \left[\begin{matrix}
    \mathbf{u}^{\text{lin}}\\
    \mathbf{u_{0}}
  \end{matrix}\right]
  =\mathbf{0}
$$
are the equations for the variables on the boundary $\partial D$ of
$D$, and $\mathbf{u_{0}}$ are the values of $u(x,y)$ at grid points on
the boundary $\partial\tilde{D}$ of $\tilde{D}$. This boundary value
problem gives a solution $\mathbf{u}^{\text{lin}}$ that satisfies the linear
constraint. By assumption {\bf
  (H)}, we have
$$
  \| \mathbf{u}^{\text{lin}} - \mathbf{u}^{\text{ext}}\|_{\infty} = O(h^{p}) \,.
$$

By the triangle inequality, it is sufficient to estimate
$$
  \| \mathbf{u} - \mathbf{u}^{\text{lin}} \|_{2} \,.
$$
Let $P$ be the projection matrix to $\mathrm{Ker}(\mathbf{A})$. Then equation
\eqref{opt} implies $\mathbf{u} = P \mathbf{v}$. Since
$\mathbf{u}^{\text{lin}} \in \mathrm{Ker}(A)$, we have
$$
  \mathbf{u} - \mathbf{u}^{\text{lin}} = P \mathbf{v} - \mathbf{u}^{\text{lin}} =
  P( \mathbf{v} - \mathbf{u}^{\text{lin}}) = P( \mathbf{v} -
  \mathbf{u}^{\text{ext}}) + P( \mathbf{u}^{\text{ext}} - \mathbf{u}^{\text{lin}}) \,.
$$
Take the $L^{2}$ norm on both side and apply the triangle inequality, we have
\begin{equation}
\label{2-2-1}
  h\|\mathbf{u} - \mathbf{u}^{\text{lin}}\|_{2} \leq h\|P( \mathbf{v} -
  \mathbf{u}^{\text{ext}})\|_{2} + h\| P( \mathbf{u}^{\text{ext}} - \mathbf{u}^{\text{lin}})
  \|_{2} \,.
\end{equation}
The second term is easy to bound because
\begin{eqnarray}
\label{2-2-2}
  h\| P( \mathbf{u}^{\text{ext}} - \mathbf{u}^{\text{lin}})
  \|_{2}  &\leq& h \| \mathbf{u}^{\text{ext}} - \mathbf{u}^{\text{lin}}\|_{2} \\ \nonumber
&\leq&
  h N \| \mathbf{u}^{\text{ext}}
  - \mathbf{u}^{\text{lin}}\|_{\infty} = O(h^{p}) \,.
\end{eqnarray}

By assumption {\bf (H)}, $\mathbf{w} = \mathbf{v} - \mathbf{u}^{\text{ext}}$
is a random vector with i.i.d. entries. And $P$ projects $\mathbf{w}$
from $\mathbb{R}^{N\times N}$ to $\mathrm{Ker}(\mathbf{A})$. Note that the
dimension of $\mathrm{Ker}(\mathbf{A})$ is $4N - 4$. Let $S \in SO(N^{2})$ be an
orthogonal matrix such that the first $4N-4$ columns of $S^{T}$ form an
orthonormal basis of $\mathrm{Ker}(\mathbf{A})$. Let $\mathbf{s}_{1}, \cdots,
\mathbf{s}_{N^{2}}$ be column vectors of $S^{T}$. Then $S$ is a
change-of-coordinate matrix such that $\mathrm{Ker}(\mathbf{A})$ is spanned by
$\mathbf{s}_{1}, \cdots, \mathbf{s}_{4N-4}$.

Let
$$
  S \mathbf{w} = [\hat{w}_{1}, \cdots, \hat{w}_{N^{2}}]^{T} \,.
$$
We have
$$
  P \mathbf{w} = \sum_{i = 1}^{4N-4} \hat{w}_{i} \mathbf{s}_{i} \,.
$$
This implies 
$$
  \mathbb{E}[ \|P \mathbf{w}\|_{2}] = \mathbb{E}[ \left (\sum_{i =
      1}^{4N-4}\hat{w}_{i}^{2} \right )^{1/2}] \leq \left (\sum_{i = 1}^{4N-4}
  \mathbb{E}[\hat{w}_{i}^{2}] \right )^{1/2}\,,
$$
because $\{\mathbf{s}_{i = 1}^{N^{2}}\}$ are orthonormal
vectors. 

We have
$$
  \hat{w}_{i} = \sum_{j = 1}^{N^{2}}S_{ji} w_{i} \,,
$$
where $w_{i}$ is the $i$-th entry of $\mathbf{w}$. $S$ is orthogonal
hence
$$
  \sum_{j = 1}^{N^{2}} S_{ji}^{2} = 1 \,. 
$$
Recall that entries of $\mathbf{w}$ are i.i.d. random
variables with expectation zero and variance $\zeta^{2}$. This implies
$$
  \mathbb{E}[ \hat{w}_{i}^{2}] = \zeta^{2}\sum_{j = 1}^{N^{2}}
  S_{ji}^{2} = \zeta^{2} \,.
$$
Hence
\begin{equation}
\label{2-2-3}
  \mathbb{E}[ \| \mathbf{v} - \mathbf{u}^{\text{ext}} \|_{2}] =
  \mathbb{E}[\| P \mathbf{w} \|_{2}] \leq \sqrt{4N-4} \cdot \zeta \,.
\end{equation}

The proof is completed by combining equations \eqref{2-2-1},
\eqref{2-2-2}, and \eqref{2-2-3}. 

\end{proof}

%Give the proof here

\subsection{Concentration of errors}\label{Concentration of errors}
The empirical performance of our algorithm is actually much better
than the theoretical bound given in Theorem \ref{proj}. This is
because the error term $\mathbf{u} - \mathbf{u}^{\text{ext}}$ usually
concentrates at the boundary of the domain. To see this, we can
calculate the basis of $\mathrm{Ker}(\mathbf{A})$. In 1D, $\mathrm{Ker}(\mathbf{A})$
only contains two linear functions, which has very little error
concentration. In 2D, for the case of the Laplacian, $f = 0$
on the unit square domain with $M = N$, a basis of $\mathrm{Ker}(\mathbf{A})$ can be explicitly
given. The following proposition follows easily from some elementary calculations.

\begin{pro}
\label{basis}
Let $\mathbf{A}$ be the discretized Laplacian on a square domain with
$N\times N$ grids. Without loss of generality assume $N$
is odd. Let $(x, y) = (\frac{i-1}{N-1}, \frac{j-1}{N-1})$ and $h =
1/(N-1)$. Define vectors $\mathbf{u}^{k, p} \in \mathbb{R}^{N\times
  N}, p = 1, \cdots, 8$ by
\begin{equation}
\label{laplace_basis}
\begin{split}
u^{k,1}_{i,j} &= b^{1}_k \sin(2 k \pi x) e^{c_{k}y}, \quad \quad
u^{k,2}_{i,j} = b^{2}_k \cos(2 k \pi x) e^{c_{k}y} \,\\
u^{k,3}_{i,j} &= b^{3}_k \sin(2 k \pi x) e^{c_{k}(1- y)},\quad
u^{k,4}_{i,j} = b^{4}_k \cos(2 k \pi x) e^{c_{k}(1-y)},\\
u^{k,5}_{i,j} &= b^{5}_k \sin(2 k \pi y) e^{c_{k}x}, \quad \quad
u^{k,6}_{i,j} = b^{6}_k \cos(2 k \pi y) e^{c_{k}x},\\
u^{k,7}_{i,j} &= b^{7}_k \sin(2 k \pi y) e^{c_{k}(1- x)},\quad
u^{k,8}_{i,j} = b^{8}_k \cos(2 k \pi y) e^{c_{k}(1-x)},
\end{split}
\end{equation}
where $c_{k} = h^{-1}cosh^{-1}(2 - cos(2 k \pi h))$, $k = 1, \cdots,
(N-3)/2$ for $u^{k,1}, u^{k,3}, u^{k, 5}, u^{k, 7}$ and $k = 1,
\cdots, (N-1)/2$ for $u^{k,2}, u^{k,4}, u^{k, 6}, u^{k, 8}$,
and $b^{p}_{k}$ are normalizers to make $\| \mathbf{u}^{k, p}\|_{2} = 1$. Further
define vectors $\mathbf{u}^{l, p} \in \mathbb{R}^{N\times N}$, $p = 1, 2, 3, 4$ such that
\begin{equation}
  \label{laplace_basis_2}
u^{l, 1}_{i,j} = 1, \quad
u^{l, 2}_{i,j} = x,\quad
u^{l, 3}_{i,j}  = y ,\quad
u^{l, 4}_{i,j}  = xy.
\end{equation}
Then 
$$
  \mathcal{B} = \{ \{\mathbf{u}^{k, p}\}_{p = 1}^{8}, \{\mathbf{u}^{l, p}\}_{p = 1}^{4}\}
$$
is a basis of
$\mathrm{Ker}(A)$. 
\end{pro}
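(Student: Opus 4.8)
The statement splits into three parts: that $\dim\mathrm{Ker}(\mathbf A)=4N-4$, that each listed vector lies in $\mathrm{Ker}(\mathbf A)$, and that the $4N-4$ listed vectors are linearly independent. For the dimension, order the $N^2$ unknowns so that the $(N-2)^2$ interior ones come first; then $\mathbf A=[\mathbf A_{\mathrm{int}}\mid\mathbf A_{\partial}]$, where $\mathbf A_{\mathrm{int}}$ is exactly the standard five-point discrete Dirichlet Laplacian on the interior grid and hence nonsingular. Thus $\mathbf A$ has full row rank $(N-2)^2$ and $\dim\mathrm{Ker}(\mathbf A)=N^2-(N-2)^2=4N-4$; moreover, nonsingularity of $\mathbf A_{\mathrm{int}}$ shows that a discrete harmonic function is determined by its values on the $4N-4$ boundary nodes, so restriction to the boundary is a linear isomorphism $\mathrm{Ker}(\mathbf A)\to\mathbb R^{4N-4}$. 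A direct count gives $4\cdot\frac{N-3}{2}+4\cdot\frac{N-1}{2}+4=4N-4$ vectors in $\mathcal B$, so it is enough to prove their linear independence.

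To check membership in $\mathrm{Ker}(\mathbf A)$ I would use separation of variables. For a product grid function $g(x_i)\varphi(y_j)$ the five-point stencil at an interior node equals $\varphi(y_j)\big(g(x_{i+1})+g(x_{i-1})-2g(x_i)\big)+g(x_i)\big(\varphi(y_{j+1})+\varphi(y_{j-1})-2\varphi(y_j)\big)$. The addition formulas give $g(x_{i+1})+g(x_{i-1})=2\cos(2k\pi h)\,g(x_i)$ when $g(x)=\sin(2k\pi x)$ or $\cos(2k\pi x)$, and $\varphi(y_{j+1})+\varphi(y_{j-1})=2\cosh(c_kh)\,\varphi(y_j)$ when $\varphi(y)=e^{c_ky}$ or $e^{c_k(1-y)}$, so the stencil collapses to $\big(2\cos(2k\pi h)+2\cosh(c_kh)-4\big)g(x_i)\varphi(y_j)$, which vanishes precisely because $c_k$ was defined so that $\cosh(c_kh)=2-\cos(2k\pi h)$. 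One should also record that $2-\cos(2k\pi h)\ge1$, with equality only if $kh\in\mathbb Z$, which never occurs for $1\le k\le(N-1)/2$; hence $c_k$ is real and strictly positive and $e^{c_ky}$, $e^{c_k(1-y)}$ are linearly independent functions of $y$. The other four trigonometric families follow from the substitutions $y\mapsto1-y$ and $x\leftrightarrow y$, and $\mathbf u^{l,1},\dots,\mathbf u^{l,4}$ lie in $\mathrm{Ker}(\mathbf A)$ because the five-point stencil annihilates $1$, $x$, $y$ and $xy$ (the cross term cancels, as only pure second differences appear).

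Linear independence is the main obstacle; the plan is to pass to boundary traces, where by the isomorphism above it suffices to show that the traces of the vectors in $\mathcal B$ on the $4N-4$ boundary nodes span $\mathbb R^{4N-4}$. Two facts carry the argument. First, for each fixed $k$ the functions $e^{c_ky}$ and $e^{c_k(1-y)}$ span the two-dimensional solution space of the one-step $y$-recurrence, and the $2\times2$ matrix with rows $(1,e^{c_k})$ and $(e^{c_k},1)$ is invertible since $1-e^{2c_k}\ne0$; this lets one replace a pair such as $\{\mathbf u^{k,1},\mathbf u^{k,3}\}$ — which already vanish on the left and right edges — by two grid functions whose traces are a multiple of $\sin(2k\pi x)$ on the bottom edge only, and on the top edge only, respectively, and the same manoeuvre applies to the other sine and cosine pairs. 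Second, a one-dimensional lemma: on the grid $x_i=(i-1)/(N-1)$ the $N$ functions $\{1,x\}\cup\{\sin(2k\pi x_i)\}_{k=1}^{(N-3)/2}\cup\{\cos(2k\pi x_i)\}_{k=1}^{(N-1)/2}$ form a basis of $\mathbb R^N$, because the $N-1$ of them other than $x$ are exactly the real discrete-Fourier basis of the codimension-one subspace $\{f\in\mathbb R^N:f_N=f_1\}$ — here $\cos\!\big(2\cdot\tfrac{N-1}{2}\pi x_i\big)=(-1)^{i-1}$ is the Nyquist mode, which is where the hypothesis that $N$ is odd enters — while $x$ is not periodic and supplies the missing direction. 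Restricting the trace system first to the bottom-and-top edges and then to the left-and-right edges, and using the $2\times2$ untangling together with the one-dimensional lemma, reduces invertibility of the $(4N-4)\times(4N-4)$ trace matrix to that of these explicit small blocks. The laborious step — and where I expect most of the work to sit — is the bookkeeping of how the cosine families $\mathbf u^{k,2},\mathbf u^{k,4},\mathbf u^{k,6},\mathbf u^{k,8}$ and the bilinear vector $xy$ spill over onto all four edges, which is what obstructs an outright block-diagonal structure; a tidier organization is to first diagonalize the two reflections $i\mapsto N+1-i$ and $j\mapsto N+1-j$ (replacing each pair such as $\{\mathbf u^{k,1},\mathbf u^{k,3}\}$ by $\{\sin(2k\pi x)\cosh(c_k(y-\tfrac12)),\,\sin(2k\pi x)\sinh(c_k(y-\tfrac12))\}$), so that $\mathrm{Ker}(\mathbf A)$ and $\mathcal B$ split into the four reflection-symmetry sectors, of dimensions $N$, $N-1$, $N-1$ and $N-2$, turning the single independence check into four smaller structurally identical ones.
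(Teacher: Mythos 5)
Your proposal is correct, and it is in fact considerably more of a proof than the paper itself records: the paper only remarks that the proposition ``follows easily from some elementary calculations'' and gives no argument. Your kernel-membership check (separation of variables on the five-point stencil together with the identity $\cosh(c_k h)=2-\cos(2k\pi h)$, plus the observation that $c_k$ is real and strictly positive) and the dimension count $\dim\mathrm{Ker}(\mathbf A)=N^2-(N-2)^2=4N-4$ via nonsingularity of the interior Dirichlet block are precisely the elementary calculations the paper has in mind. What you add is the part the paper leaves implicit: an actual linear-independence argument. Your route --- pass to boundary traces through the isomorphism $\mathrm{Ker}(\mathbf A)\to\mathbb R^{4N-4}$, invoke the one-dimensional lemma that $\{1,x\}\cup\{\sin(2k\pi x_i)\}_{k\le (N-3)/2}\cup\{\cos(2k\pi x_i)\}_{k\le (N-1)/2}$ is a basis of $\mathbb R^N$ (which also explains the asymmetric index ranges and where the hypothesis that $N$ is odd enters), and split into the four reflection-symmetry sectors --- is sound, and your stated sector dimensions $N$, $N-1$, $N-1$, $N-2$ are correct (they follow from counting fixed boundary nodes of the two reflections). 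The bookkeeping you flag as remaining does close along exactly the lines you indicate: for instance, in the sector odd in $x$ and even in $y$, restricting a vanishing boundary trace to the left edge kills the coefficients of $\cos(2k\pi y)\sinh(c_k(x-1/2))$ and of $x-1/2$ by the one-dimensional lemma (using $\sinh(c_k/2)\neq 0$), and the bottom edge then kills the $\sin(2k\pi x)\cosh(c_k(y-1/2))$ family; the other three sectors are structurally identical. So your approach buys a complete justification of the basis claim, at the cost of the trace and symmetry machinery, whereas the paper relies on the reader to accept the independence as evident.
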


\begin{figure}[!h]
        \includegraphics[width=0.6\linewidth]{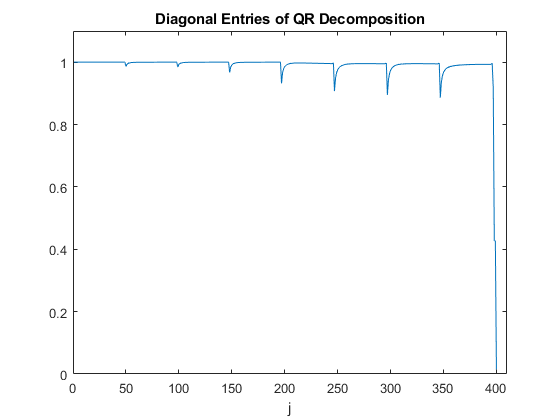}
\caption{Diagonal entries from the $R$ matrix of the $QR$ decomposition of the basis
$\mathcal{B}$ in~\eqref{laplace_basis}
and \eqref{laplace_basis_2} respectively arranged in lexocographical order. The basis
functions are nearly orthogonal. The mesh size $N$ is $101$. Value of last diagonal entry is
$0.015$. }
\label{orth_basis}
\end{figure} 
The basis $\mathcal{B}$ in Proposition \ref{basis} is nearly an orthonormal basis in
most directions. For example, a QR decomposition of vectors in $\mathcal{B}$ shows that most diagonal terms terms nearly equal to 1 even for large $N,$ as
shown in Figure~\ref{orth_basis}. The only exception is
$\mathbf{u}^{l, 4}$, whose corresponding diagonal term is only
$0.015$. The exponential terms in vectors $\mathbf{u}^{k, p}$ indicate
exponential decay of the solution away from the boundary. Since $4N-8$
out of $4N-4$ vectors in $\mathcal{B}$ has significant concentration
at the boundary, we expect the concentration of error at the boundary
with a high probability.

The basis of $\mathrm{Ker}( \mathbf{A})$ for the general case can not
be explicitly given. Instead, we can compute principal angles
between $\mathrm{Ker}(\mathbf{A})$ and $\Theta_{D}$, where $\Theta_{D}$ is the
subspace spanned by coordinate vectors corresponding to boundary
layer with thickness $D$. In other words,
$$
\Theta_{D} = \mathrm{span}\{ \mathbf{e}_{i,j} \,|\, i \leq D \mbox{ or
} j \leq D \mbox{ or } i \geq N-D \mbox{ or } j \geq N-D\} \,.
$$
If most principal angles are small, $\mathrm{Ker}(\mathbf{A})$ is almost
parallel with $\Theta_{D}$. And the projection of a random vector to
$\Theta_{D}$ preserves most of its length. In other words, we see a
concentration of error terms at the boundary of the domain. 

Principal angles $0 \leq \theta_{1} \leq \cdots \leq \Theta_{4N-4}
\leq \pi/2$ are a sequence of angles that describe the angle between
$\mathrm{Ker}(\mathbf{A})$ and $\Theta_{D}$. The first one is
$$
  \theta_{1} = \min\{ \arccos ( \frac{\bm{\alpha}\cdot \bm{\beta}}{\|
    \bm{\alpha}\| \| \bm{\beta} \|} ) \,|\, \bm{\alpha} \in
  \mathrm{Ker}(\mathbf{A}), \bm{\beta} \in \Theta_{D} \} = \angle
  (\bm{\alpha}_{1}, \bm{\beta}_{1})\,.
$$
Other angles are defined recursively with
$$
  \theta_{i} = \min\{ \arccos ( \frac{\bm{\alpha}\cdot \bm{\beta}}{\|
    \bm{\alpha}\| \| \bm{\beta} \|} ) \,|\, \bm{\alpha} \in
  \mathrm{Ker}(\mathbf{A}), \bm{\beta} \in \Theta_{D} , \bm{\alpha} \perp
  \bm{\alpha}_{j}, \bm{\beta} \perp \bm{\beta}_{j}, \forall 1\leq j \leq i-1\} \,,
$$
such that $\angle ( \bm{\alpha}_{i}, \bm{\beta}_{i}) =
\theta_{i}$. Without loss of generality assume $\|\bm{\alpha}_{i}\| =
1$ for all $1 \leq i \leq 4N-4$. Since
$\mathrm{dim}( \Theta_{D}) \geq \mathrm{dim}( \mathrm{Ker}(\mathbf{A}))$, it is easy to see that $\{ \bm{\alpha}_{1} , \cdots,\bm{\alpha}_{4N-4}\}$ forms an orthonormal basis of
$\mathrm{Ker}(A)$. Recall that $\mathbf{u} \in \mathrm{Ker}(\mathbf{A})$ and that the error
$\mathbf{u} - \mathbf{u}^{\text{ext}}$ is approximated by the project of a
random vector $\mathbf{w}$ with i.i.d. entries to the subspace $\mathrm{Ker}(\mathbf{A})$. Hence we
can further assume that $\bm{\xi} = \mathbf{u} - \mathbf{u}^{\text{ext}}$
is approximated by a random vector
\begin{equation}
\label{eq2-3-1}
  \bm{\xi} = \sum_{i = 1}^{4N-4} c_{i} \bm{\alpha}_{i} \,,
\end{equation}
where $c_{i}$ are i.i.d. random variables with
zero mean and variance $\zeta^{2}$. Define
$$
  p_{D}( \bm{\xi}) = \frac{\mathbb{E}[\| P_{\Theta_{D}}
    \bm{\xi}\|]}{\mathbb{E}[\| \bm{\xi}\|]}
$$
as the mean weight of $\mathbf{\xi}$ projected on to the boundary
layer, where $P_{\Theta_{D}}$ is the projection matrix to $\Theta_{D}$. Assume $\xi$ satisfies equation \eqref{eq2-3-1}. It is easy to see that
$$
  p_{D}( \bm{\xi}) = \frac{1}{4N-4}\sum_{i = 1}^{4N-4} \cos(
  \theta_{i}) \,.
$$
Therefore, $p_{D}( \bm{\xi})$ measures the degree
of concentration of errors on the boundary layer with thickness
$D$. 

Principal angles can be numerically computed by an SVD decomposition. In Figure
\ref{angles}, we list all principal angles for $D = 1, 2, 3$. The
matrices in Figure \ref{angles} are given by discretization of 2D
Fokker-Planck equations \eqref{F-P} 
for $f = 0$ (left panel) and $f$ as in equation \ref{ring} (right panel). The size of a
block is $(50 + D) \times (50 + D)$. We can see that the mean weight of $\bm{\xi}$ projected
to $\Theta_{D}$ is very large. In other words most of the error
term $\mathbf{u} - \mathbf{u}^{\text{ext}}$ concentrates at the
boundary layer. We also remark that the degree of concentration of
error terms increases with the dimension.

The 2D case is demonstrated in Figure \ref{angles}. And our
computation shows that the error concentration is even more significant in 3D.

\begin{figure}[!h]
        \includegraphics[width=\linewidth]{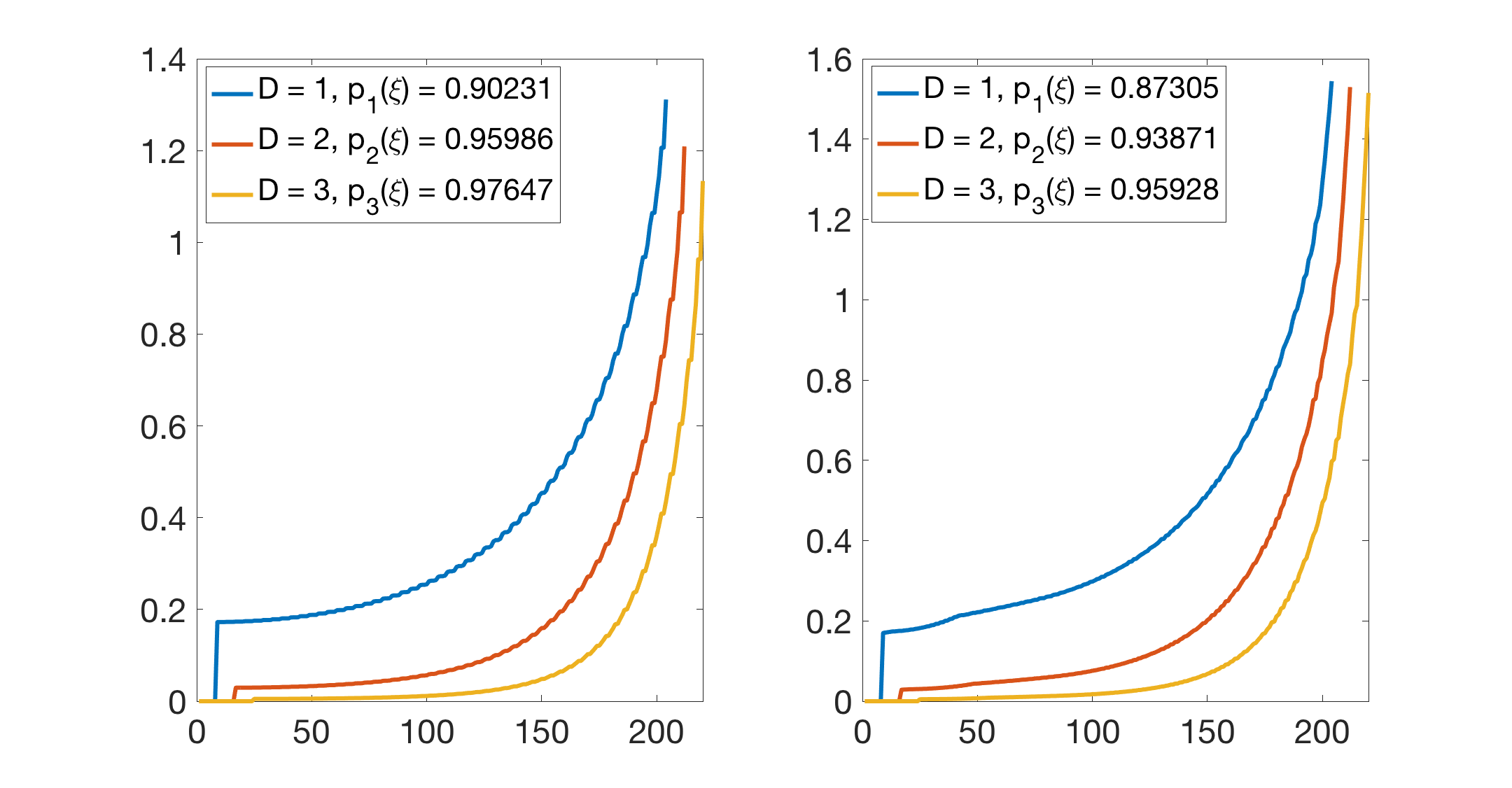}
\caption{Principal angles between $\mathrm{Ker}(\mathbf{A})$ and $\Theta_{D}$
  for $D = 1, 2, 3$. Left: Diffusion process without drift. Right:
  Fokker-Planck equation as given in Example \ref{Ring density function}. }
\label{angles}
\end{figure} 

Figure \ref{blockerror} shows an empirical test of the spatial
distribution of error terms. The Fokker-Planck equation is still from
the ring density function as in Section \ref{Ring density function}. We choose a $64\times 64$ block on $[0, 1]\times [0, 1]$
and solve the Fokker-Planck equation with our hybrid solver. The
Monte Carlo simulation uses $10^{6}$ sample points. The numerical
solutions $\mathbf{v}$ and $\mathbf{u}$ are compared with the exact
solution $\mathbf{u}^{\text{ext}}$ in the top left and right panel,
respectively. As a comparison, we also produce $10^{6}$ unbiased samples from the
invariant density itself, denoted by $\mathbf{v}'$. The solution of
the hybrid solver from $\mathbf{v}'$ is denoted by $\mathbf{u}'$. We can clearly
see that most error of $\mathbf{u}$ and $\mathbf{u}'$ concentrates at the boundary of
the domain. The bottom panel compares the relative weight of error
concentrating on the boundary layer for $D = 1, 2, 3, 4$. The relative
weights $\rho_{u}$ and $\rho_{v}$ are given by
$$
  \rho_{v} = \frac{\| P_{\Theta_{D}} ( \mathbf{v} - \mathbf{u}^{\text{ext}}) \|}{\|
    \mathbf{v} - \mathbf{u}^{\text{ext}} \|} \quad , \mbox{and } \quad \rho_{u} = \frac{\| P_{\Theta_{D}} ( \mathbf{u} - \mathbf{u}^{\text{ext}}) \|}{\|
    \mathbf{u} - \mathbf{u}^{\text{ext}} \|} \,,
$$
respectively. $\rho_{u'}$ and $\rho_{v'}$ are also defined
analogously. 

From Figure \ref{blockerror}, the spatial concentration of $\mathbf{u} -
\mathbf{u}^{\text{ext}}$ on the boundary layer is less than the theoretical prediction given
before, mainly because the sample itself has bias. But we can still see a significant concentration of error on
the boundary layer. The error concentration of $\mathbf{u}' -
\mathbf{u}^{\text{ext}}$ is much better. Almost all errors of
$\mathbf{u}'$ are concentrated on the two boundary layers. It is worth to mention that although the unbiased sample $\mathbf{v}'$ has
little visual difference from the Monte Carlo data $\mathbf{v}$, the
resultant solution $\mathbf{u}'$ has significant better performance in
terms of error concentration on the boundary. Hence this example
also demonstrates the importance of choosing a good Monte Carlo sampler. 

\begin{figure}[htbp]
\centerline{\includegraphics[width = \linewidth]{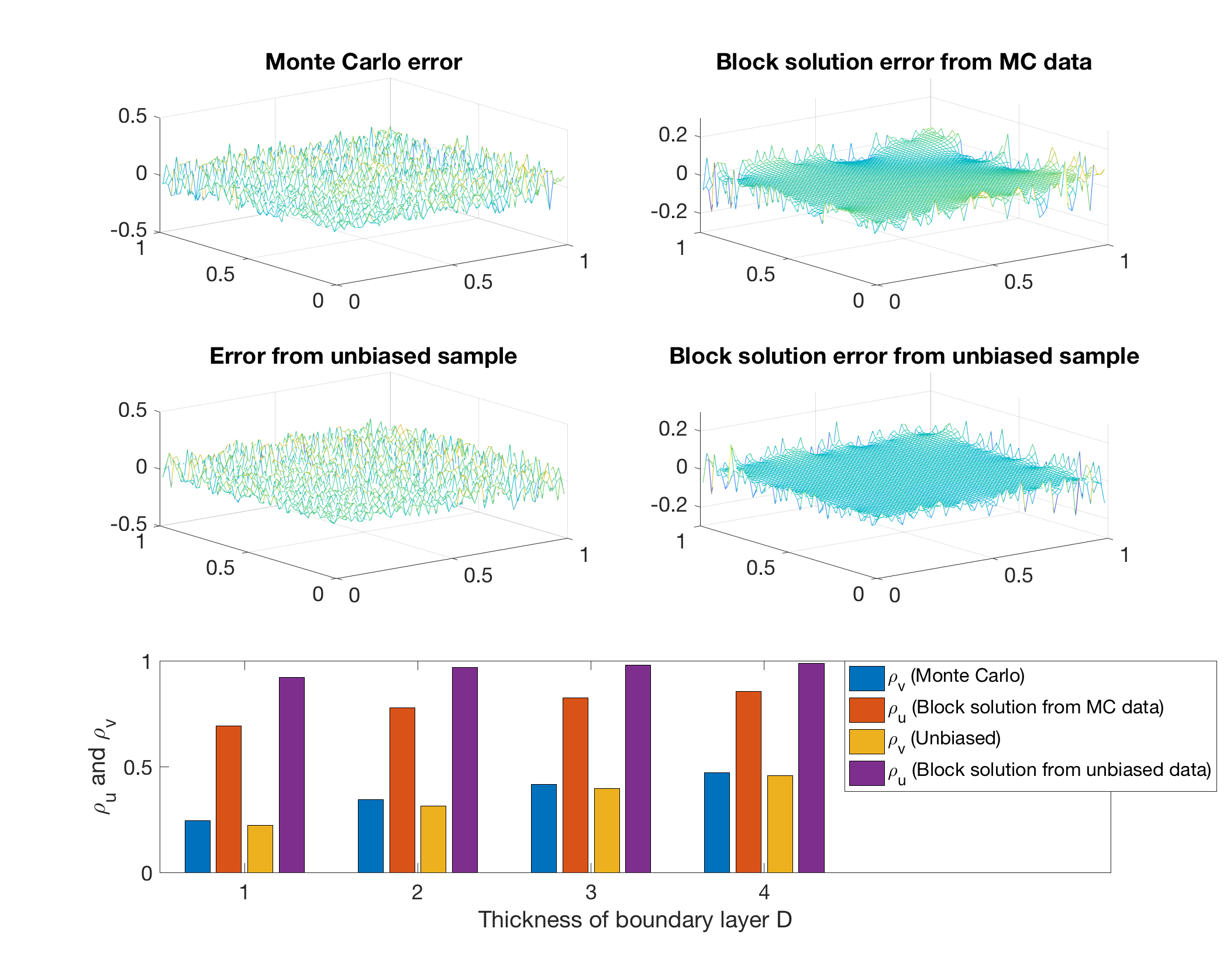}}
\caption{Empirical spatial distribution of error term for the ring
  density function as in Section \ref{Ring density function}.
  Top left: $\mathbf{v} - \mathbf{u}^{\text{ext}}$. Top right: $\mathbf{u} -
  \mathbf{u}^{\text{ext}}$. Middle left: $\mathbf{v}' -
  \mathbf{u}^{\text{ext}}$. Middle right: $\mathbf{u}' -
  \mathbf{u}^{\text{ext}}$. Bottom: Comparison of $\rho_{v}$,
  $\rho_{u}$, $\rho_{v'}$, $\rho_{u'}$ for $D = 1, 2, 3, 4$.}
\label{blockerror}
\end{figure}

%Show that the empirical result is actually much better than the proof
%because errors tend to concentrate at the boundary of the
%domain. Explain the reason, show the limit case (r is extremely
%small) by using maximal
%principle, and demonstrate some numerical results. 

\section{Block Fokker-Planck Solver}\label{Block Fokker-Planck Solver}

%\subsection{Description of block Fokker-Planck solver}

Since we can use the hybrid method to compute the Fokker-Planck equation on any
region in the phase space, a straightforward improvement is to apply
the divide-and-conquer strategy. We can
divide the interested numerical domain into small blocks and then
combine the results on these blocks to generate the solution on the
original big domain. As discussed in the introduction, assume we divide
an $N^{d}$ mesh into many $m^{d}$ blocks, where $m \ll N$. If the linear
solver to an $n\times n$ matrix has $O(n^{p})$ complexity (usually $p
> 1$), the total computational cost is reduced from $N^{pd}$ to
$m^{(p-1)d}N^{d}$. In addition, this block solver significantly simplifies parallel
computing, since all blocks are independent and satisfy the same
Fokker-Planck equation. We can also change the grid size for each
block based on whether the data is dense or sparse in a subregion to
further reduce the computational cost. Moreover, we can apply our method to
problems with irregular domains by dividing it into many small
rectangular blocks. 

For simplicity, we still use a rectangular domain $D = [a, a'] \times
[b, b']$ to describe our algorithm, and assume that we want to solve
$u$ in $D$. We divide $D$ into $K\times L$ blocks $\{D_{k,l}\}_{k=1,
  l=1}^{k=K, l=L}$ with $D_{k,l} = [a_{k-1}, a_{k}] \times [b_{l-1},
b_{l}]$, where $a_{k} = a+k(a'-a)/K$ and $b_{l} = b+l(b'-b)/L$.  

Following the algorithm presented in Section \ref{Algorithm description}, we
construct an $N\times M$ grid on $D_{k,l}$ and discretize the
Fokker-Planck equation. This gives a linear constraint
$$
  \mathbf{A_{k,l}} \mathbf{u^{k,l}} = \mathbf{0}
$$
on $D_{k,l}$, where $\mathbf{A_{k,l}}$ is a $(N-2)(M-2) \times (NM)$
matrix. Then we obtain a reference solution $\mathbf{v^{k,l}}$ from
the Monte-Carlo simulation by picking up the corresponding values
$\{v^{k,l}_{i,j}\}_{i = 1, j = 1}^{i = N, j = M}$ from the global
simulation result $\mathbf{v}$, such that   
$$v^{k,l}_{i,j}=v((k-1)N+i,(l-1)M+j) $$
for $k=1,\dots,K, l=1,\dots,L, i=1,\dots,N, j=1,\dots,M$. This gives
an optimization problem on $D_{k, l}$
\begin{eqnarray}
\label{blockopt}
 \mbox{min} &  & \| \mathbf{u} - \mathbf{v_{k,l}} \|_{2} \\\nonumber
\mbox{subject to } & & \mathbf{A_{k,l}} \mathbf{u} = \mathbf{0}  \,.
\end{eqnarray}
We denote the solution to \eqref{blockopt} by $\mathbf{u_{k,l}}$,
which can be obtained by calculating
$$
  \mathbf{u_{k,l}} = \mathbf{A_{k,l}}^{T}( \mathbf{A_{k,l}}
  \mathbf{A_{k,l}}^{T})^{-1}(-\mathbf{A_{k,l}} \mathbf{v_{k,l}})+\mathbf{v_{k,l}} \,.
$$ 

Now, the $(i,j)$ coordinate $u^{k,l}_{i,j}$ of $\mathbf{u_{k,l}}$ is
an approximation of $u$ at the point $(ih + a_{k-1} - h/2, jh +
b_{l-1} - h/2)$, where $h=(a_{k} - a_{k-1})/N = (b_{l} - b_{l-1})/M$
is the grid size when we divide $D_{k,l}$ into $N\times M$
boxes. It remains to combine all local solutions $\{\mathbf{u_{k,l}}\}_{k = 1, l =
  1}^{k = K, l = L}$ on all blocks by collaging them together, i.e., 
$$
  u(ih + a_{k-1} - h/2, jh + b_{l-1} - h/2) = u^{k,l}_{i,j} \,,
$$ 
$k=1,\dots,K, l=1,\dots,L,
i=1,\dots,N, j=1,\dots,M$. The collage numerically solves the
Fokker-Planck equation \eqref{F-P} on the whole domain $D$. 

%Introduce the block version of this algorithm

\section{Reducing Interface Error}\label{Reducing Interface Error}

As discussed in Section \ref{Concentration of errors}, the
optimization problem \eqref{opt} projects most error terms to the
boundary of the domain. For the block algorithm, the solution is less 
accurate near
the boundary of each block. The error on the boundary usually looks
noisy because it inherits the randomness from Monte Carlo
simulations. As a result, there are visible fluctuations on the
interface of two adjacent blocks. To make the block solver applicable,
modifications to the solution on the interface of blocks are necessary. 

In this section, we provide two
different methods to reduce the interface
error, i.e., the overlapping blocks method and the shifting blocks
method. The overlapping blocks method expands each block locally, and keeps only the interior portion which has much lower observed errors.  The shifting blocks method makes several smoothing passes, shifting the block boundaries each time so that portions previously
on the edges are now in block interiors. Advantages and limitations of these methods will also be
discussed. 

%Discuss three ways of reducing the interface error: 1, overlapping
%blocks, 2, half-step shift, and 3, penalty function at the boundary. 

\subsection{Overlapping blocks}\label{Overlapping blocks}

Since the numerical solution of the hybrid solver has much higher
accuracy at interior points than on the boundary, the most natural
approach is to discard the boundary layer. When applying the block solver, we can enlarge the
blocks by one or two layers of boxes. Then we apply the algorithm in
Section \ref{Algorithm description} on the enlarged block. The
interior solution restricted to the original block is the new output of the block
solver. This is called the {\it overlapping blocks} method.

More precisely, recall that we first divide $D = [a, a'] \times [b,
b']$ into $K\times L$ blocks $\{D_{k,l}\}_{k=1, l=1}^{k=K, l=L}$, then
divide each block $D_{k,l} = [a_{k-1}, a_{k}] \times [b_{l-1},b_{l}]$
into $N \times M$ boxes $O^{k,l}_{i,j} = [a_{k-1} + (i-1)h, a_{k-1}
+ih] \times [b_{l-1} + (j-1)h, b_{l-1} + jh]$, where $h = (a_{k} -
a_{k-1})/N = (b_{l} - a_{l-1})/M$. Instead of $D$, now we work on the extended domain
$\tilde{D}_{k,l} = [a_{k-1}-\iota h, a_{k}+\iota h] \times
[b_{l-1}-\iota h,b_{l}+\iota h]$, where $\iota=1$ or $2$. Then the
Monte-Carlo simulation is used to get the reference solution $\mathbf{\tilde{v}}$ on the
enlarged domain $\tilde{D} = [a-\iota h, a'+\iota h] \times [b-\iota h,
b'+\iota h]$ of $D$.  

Instead of disjoint blocks $D_{k,l}$, we construct an $(N + 2 \iota) \times (M + 2 \iota)$ grid on $\tilde{D}_{k,l}$ and generate the discretized Fokker-Planck equation
$$
  \mathbf{\tilde{A}_{k,l}} \mathbf{u} = \mathbf{0}
$$
on $\tilde{D}_{k,l}$, where $\mathbf{\tilde{A}_{k,l}}$ is a
$(N+2\iota-2)(M+2\iota-2) \times ((N+2\iota)(M+2\iota))$ matrix. Then
a local reference solution $\mathbf{\tilde{v}^{k,l}}$ is obtained by picking
up the corresponding value $\tilde{v}^{k,l}_{i,j}$ from the global
simulation vector $\mathbf{\tilde{v}}.$
%$$\tilde{v}^{k,l}_{i,j}=\tilde{v}((k-1)N+i,(l-1)M+j) \,,$$
For each block,
we solve the local optimization problem~\eqref{blockopt}, and keep only
the values at the interior points, $D_{k,l}$ to create the global approximation $u.$ 

The advantage of this overlapping block method is that it is very easy
to implement. No additional treatment is necessary besides discarding
one or two boundary layers. But in higher dimension, a significant
proportion of grid points will be on the boundary of blocks. For
example, if $\iota = 2$, $M = N = 30$, the percentage of unused grid
points is $13 \%$ in 1D, $28 \%$ in 2D, $46 \%$ in 3D, and $65 \%$ in
4D. Also, as seen in Figure \ref{blockerror}, visible error inherited
from the reference solution $\mathbf{v}$ can easily penetrate through
$4-5$ boundary layers. Hence the output of solutions from the
overlapping block method usually still have some visible residual interface
error. 
%Add some analysis here based on the convergence of the method. 

\subsection{Shifting blocks}\label{Shifting blocks}

The idea of {\it shifting block} is also motivated by the concentration of
error of the solution of \eqref{opt}. To resolve the interface
fluctuation between blocks, one can simply move the interface to the
interior by shifting all blocks and recalculate the solution. Since
the solution has much higher accuracy in the interior of a block, this can
easily smooth the interface error. More precisely, after applying the
block solver, we make a ``half-block" shift
of the blocks so that boundaries of the original
blocks are now in the interior of new blocks. Then we solve
optimization problems \eqref{opt} again on newly shifted
blocks. The reference solution fed into the optimization problem
\eqref{opt} is the numerical solution from the first round. If
necessary, one can carry out this shifting block for several rounds to
cover all grid points and to improve the accuracy.

Divide the domain $D = [a, a'] \times [b, b']$ into $K\times
L$ blocks $\{D_{k,l}\}_{k=1, l=1}^{k=K, l=L}$ with $D_{k,l} =
[a_{k-1}, a_{k}] \times [b_{l-1}, b_{l}]$, where $a_{k} = a+k(a'-a)/K$
and $b_{l} = b+l(b'-b)/L$. Then we make half-block shifts to get the
shifted blocks $D'_{k,l} = [a'_{k-1}, a'_{k}] \times [b'_{l-1},
b'_{l}]$, where $a'_{k} = a_{k} +(a'-a)/2K= a+(k+1/2)(a'-a)/K$ and
$b'_{l} =b_{l} +(b'-b)/2L = b+(l+1/2)(b'-b)/L$. The Monte Carlo data
needs to cover all blocks $D_{k,l}$ and $D'_{k,l}$. 

Now construct $N\times M$ grids both on $D_{k,l}$ and $D'_{k,l}$, and generate the discretized Fokker-Planck equations
$$
  \mathbf{A_{k,l}} \mathbf{u} = \mathbf{0}\qquad
  \text{and}\qquad
  \mathbf{A'_{k,l}} \mathbf{u} = \mathbf{0}
$$
on $D_{k,l}$ and $D'_{k,l}$ respectively, where $\mathbf{A_{k,l}}$ and
$\mathbf{A'_{k,l}}$ are $(N-2)(M-2) \times (NM)$ matrices.  

We first use the original block solver to solve the optimization problem
on each $D_{k,l}$, as described in Section \ref{Block Fokker-Planck
  Solver}. This gives approximated solutions $\mathbf{u}_{k,l}$ on
each block. The first approximation $\mathbf{u}^{1} \in
\mathbb{R}^{MN}$ is obtained by collaging $\mathbf{u}_{k,l}$ from all blocks. 

Then we generate the reference solution $\mathbf{v'^{k,l}}$ on shifted
blocks $D'_{k,l}$ by using the corresponding values in
$\mathbf{u}^{1}$ whenever available. More precisely we have
$$\mathbf{v}'^{k,l}_{i,j}=\mathbf{u}^{1}_{(k-1/2)N+i,(l-1/2)M+j} $$
for $k=1,\dots,K-1, l=1,\dots,L-1, i=1,\dots,N, j=1,\dots,M$. When $k
= K$ or $l = L$, we use Monte Carlo data to produce
$\mathbf{v}'^{k,l}_{i,j}$ if $\mathbf{u}_{1}$ data is not available. Then we solve
the optimization problem~\eqref{blockopt}
%\begin{eqnarray}
% \mbox{min} &  & \| \mathbf{u} - \mathbf{v'_{k,l}} \|_{2} \\\nonumber
%\mbox{subject to } & & \mathbf{A'_{k,l}} \mathbf{u} = \mathbf{0}  \,.
%\end{eqnarray}
on the shifted block $D'_{k,l}$ to get a numerical solution
$\mathbf{u'_{k,l}}$ on $D'_{k,l}$. 

%$$
%  \mathbf{u'_{k,l}} = \mathbf{A'_{k,l}}^{T}( \mathbf{A'_{k,l}}
%  \mathbf{A'_{k,l}}^{T})^{-1}(-\mathbf{A'_{k,l}} \mathbf{v'_{k,l}})+\mathbf{v'_{k,l}} \,.
%$$

Now $\mathbf{u}'_{k,l}$ are computed on shifted blocks. We use data from
$\mathbf{u}'_{k,l}$ to produce the global solution whenever possible, that is, let 
$$
  u((k-1)N + ih + a_{k-1} - h/2, (l-1)M
+ jh + b_{l-1} - h/2) = u'^{k,l}_{i,j}
$$
for $k=2,\dots,K, l=2,\dots,L, i=1,\dots,N, j=1,\dots,M$. If $k = 1$
or $l = 1$, we use values from $\mathbf{u}^{1}$ if the data from
$\mathbf{u}'_{k,l}$ is not available. 

We remark that in practice one does not have to shift the block by
exactly one half. This shifting block method can be implemented
repeatedly, such that the solution $\mathbf{u}'$ from last round is used as the
reference solution for the next round. We find that one efficient way
of implementation is to shift the block by $1/3$ for two times to get
two solutions $\mathbf{u}'$ and $\mathbf{u}"$ on shifted blocks. Then
we feed $\mathbf{u}"$ back to the original block solver as the
reference solution. This implementation covers all grid points by
interiors of blocks. Using an iterative linear solver can
significantly accelerate the shifting blocks method. Because from the
second round, we have $\mathbf{u}_{k,l} \approx \mathbf{v}_{k, l}$ at
all interior grid points. Hence $\mathbf{0}$ is a good
initial guess when solving $(\mathbf{A}_{k,l}
\mathbf{A}_{k,l}^{T})^{-1}(-\mathbf{A}_{k,l} \mathbf{v}_{k,l})$ in the
optimization problem \eqref{blockopt}. Empirically, the total
computation time of three shifts is roughly similar to the time needed
for the first round, if the conjugated gradient linear solver is used.

\section{Numerical Examples}\label{Numerical Examples}
In this section, we consider the following
three numerical examples to test the performance of our methods.  
%Three numerical examples here.

\begin{figure}
	\center{
                \includegraphics[width=0.49\textwidth]
		{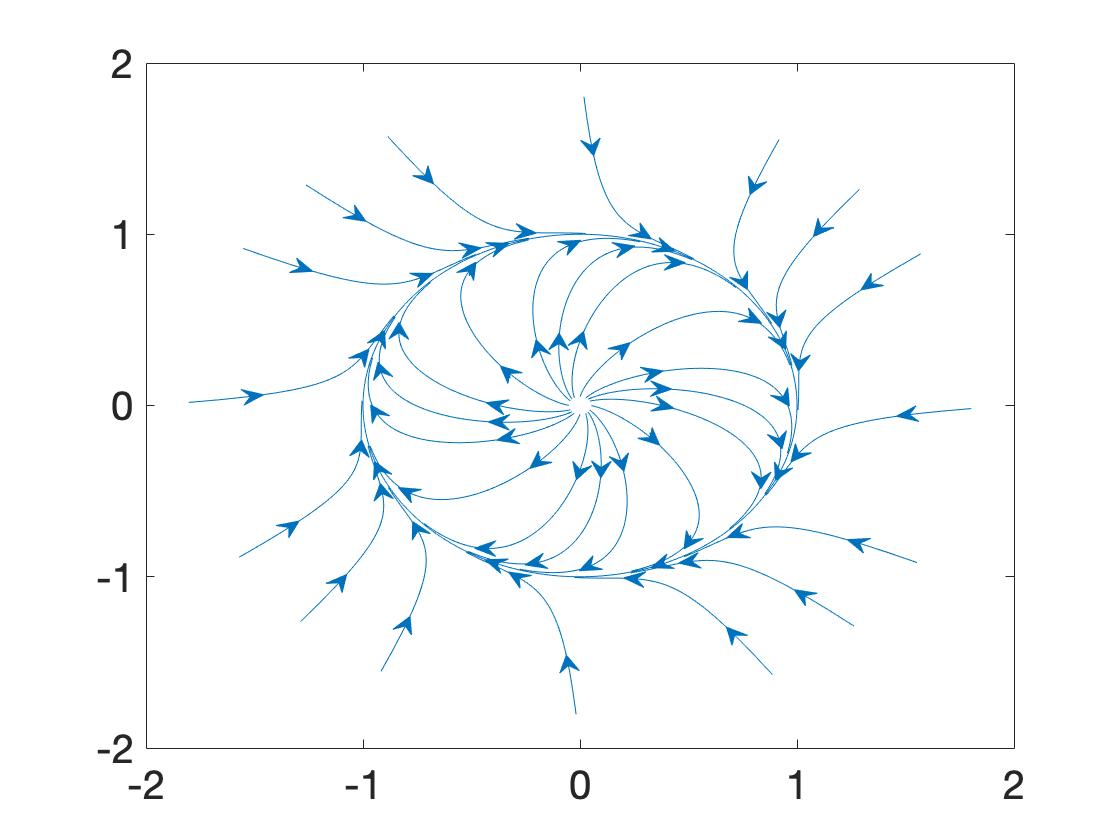}
		\includegraphics[width=0.49\textwidth]
		{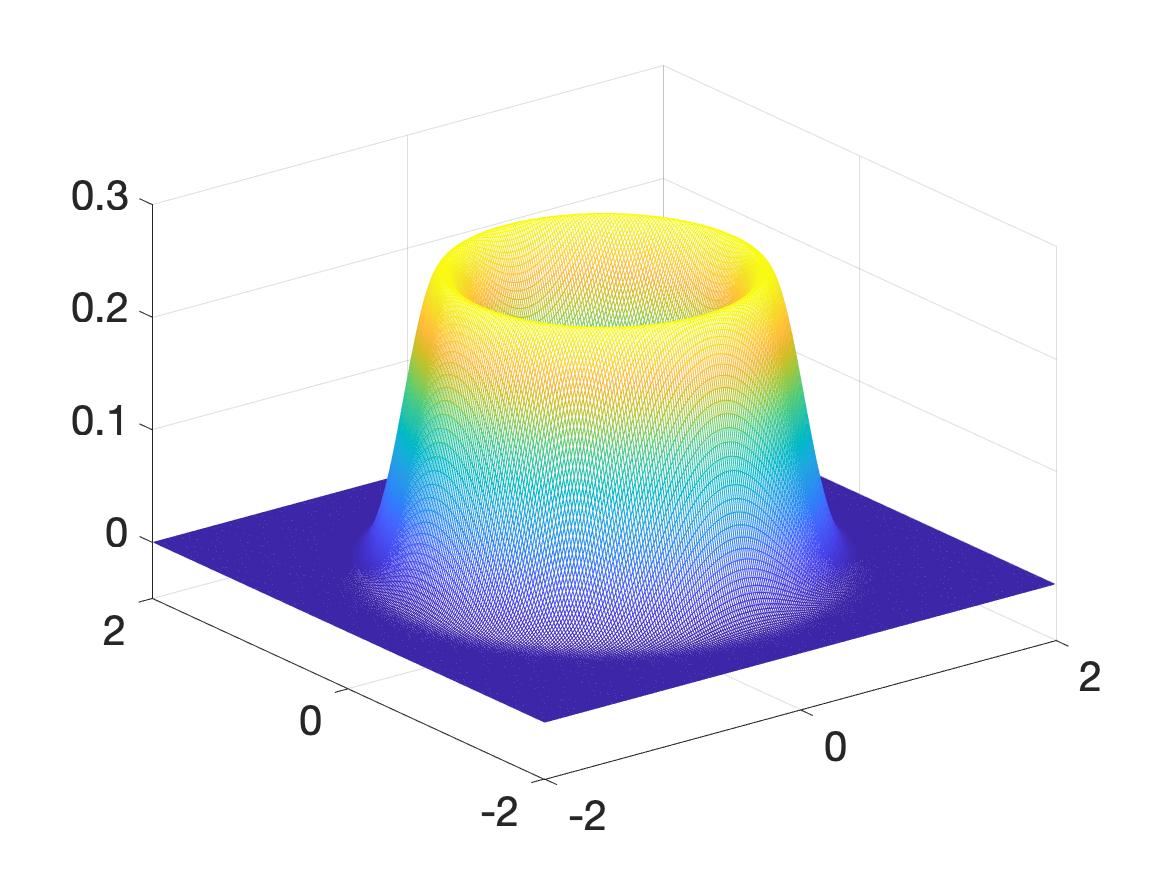}
	}
	\caption{Left: Some trajectories of the deterministic part of
          equation \eqref{ring}. Right: Exact solution of the Fokker-Planck equation
          for \eqref{ring}. }
\label{ring_exact}
\end{figure}

\subsection{Ring density function}\label{Ring density function}
Consider the following stochastic differential equation:
\begin{equation}\label{ring}
\left\{
\begin{array}{l}
dx=\big(-4x(x^2 + y^2 - 1) + y\big)\,dt + \varepsilon\,dW_t^x\\
dy=\big(-4y(x^2 + y^2 - 1) - x\big)\,dt + \varepsilon\,dW_t^y
\end{array}
\right.,
\end{equation}
where $W_t^x$ and $W_t^y$ are independent Wiener processes. To compare
the performance of different solvers in this paper, we fix the
strength of white noise to be $\varepsilon=1$. The deterministic part
of equation \eqref{ring} is a gradient system plus a perpendicular
rotation term, where the potential function of the gradient component is
$$V(x,y)=(x^2+y^2-1)^2.$$ 
See Figure \ref{ring_exact} Left for selected trajectories of equation
\eqref{ring}. The rotation term does not change the invariant probability density
function. Therefore, the deterministic part of equation \eqref{ring}
admits a limit circle $x^2+y^2=1$, and the invariant probability measure
of \eqref{ring} has density function 
$$u(x,y)=\frac{1}{K}e^{-2V/\varepsilon^2},$$
where $K=\pi\int_{-1}^{\infty}e^{-2t^2/\varepsilon^2}\,dt$ is the
renormalization constant. Therefore, the stationary Fokker-Planck equation
corresponding to \eqref{ring} has an analytic global solution $u(x,y)$ on
$\mathbb{R}^2$ (Figure \ref{ring_exact} Right). 

\begin{figure}
	\center{
		\includegraphics[width=0.49\textwidth]
		{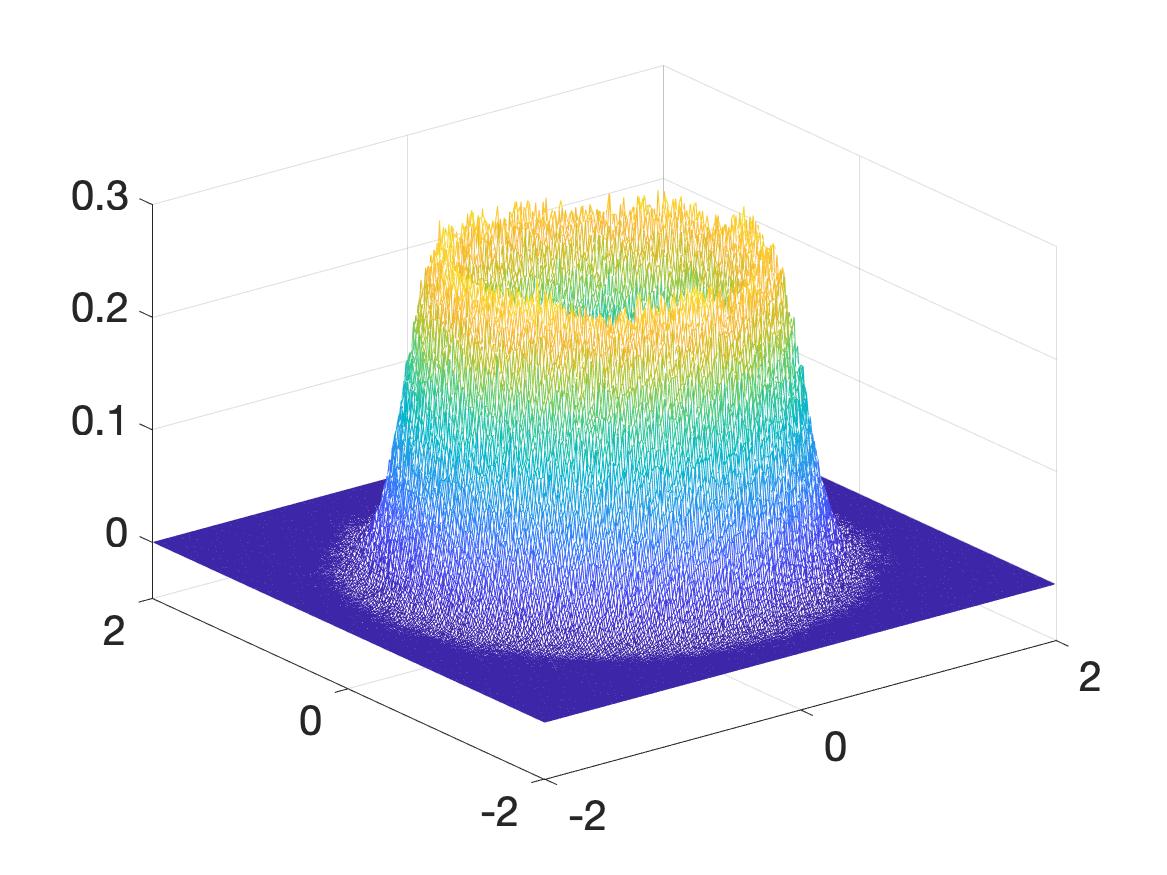}
		\includegraphics[width=0.49\textwidth]
		{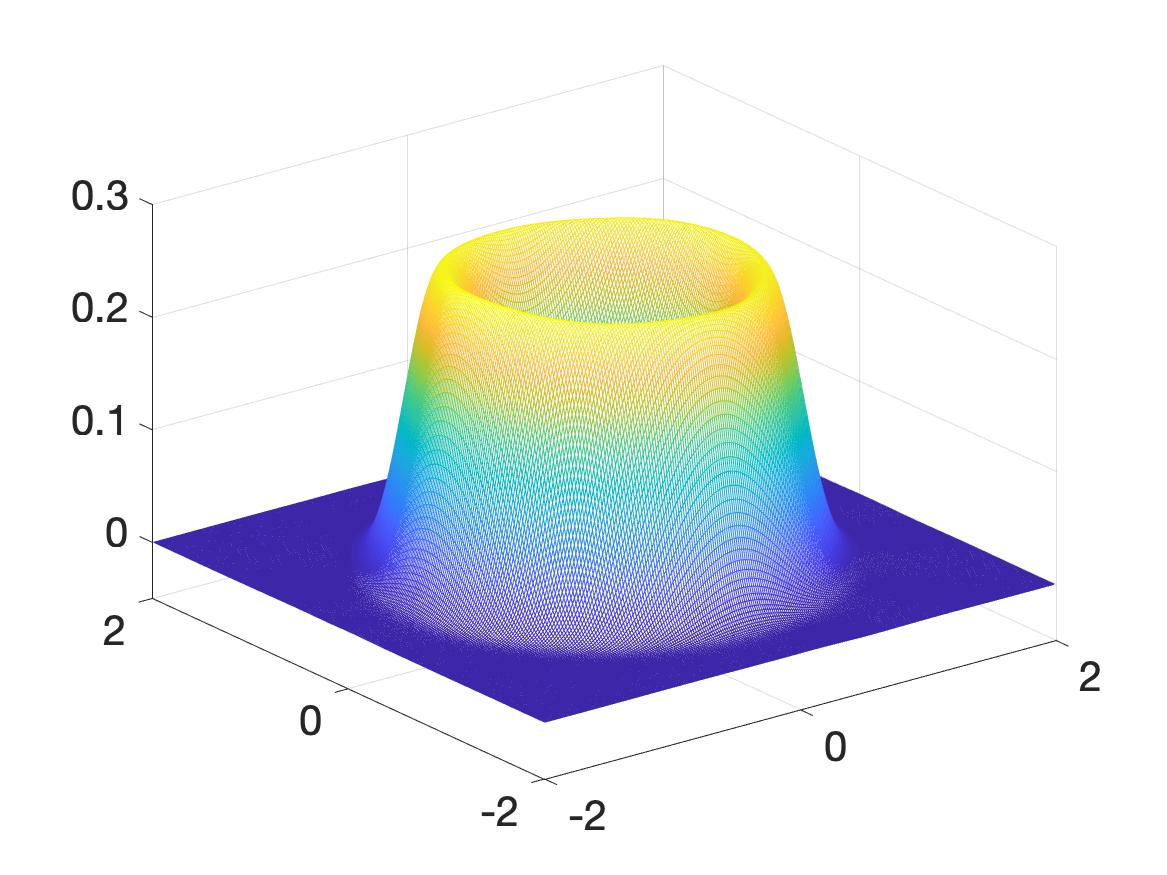}
	}
	\caption{\textbf{(Ring density)} The approximation by Monte Carlo simulation (\textbf{left}) and the algorithm in Section \ref{Algorithm description} (\textbf{right}) with $256\times256$ mesh points and $10^7$ samples.}\label{ring_data}
\end{figure}

We first look at the approximation obtained from Monte-Carlo
simulation with $256\times256$ mesh points on the domain
$D=[-2,2]\times[-2,2]$, and use step size $dt=0.002$ and $10^7$
samples in the sampling step (the left figure in Figure
\ref{ring_data}). As expected, we can see that this approximation has
too much fluctuation to be an acceptable solution of the stationary Fokker-Plank equation. But
the algorithm in Section \ref{Algorithm description} provides a
smoothed approximation of the exact solution (the right figure
in Figure \ref{ring_data}).   

\begin{figure}
	\center{
		\includegraphics[width=0.49\textwidth]
		{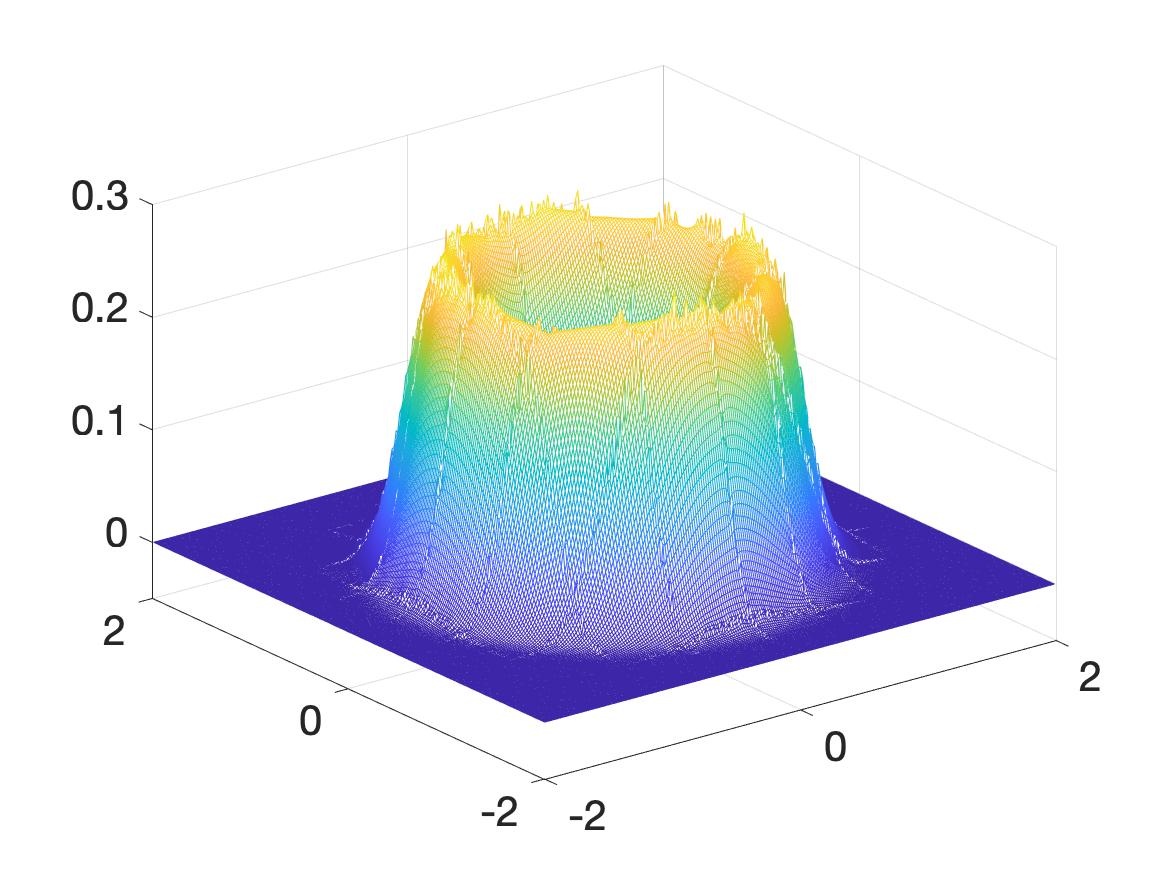}
		\includegraphics[width=0.49\textwidth]
		{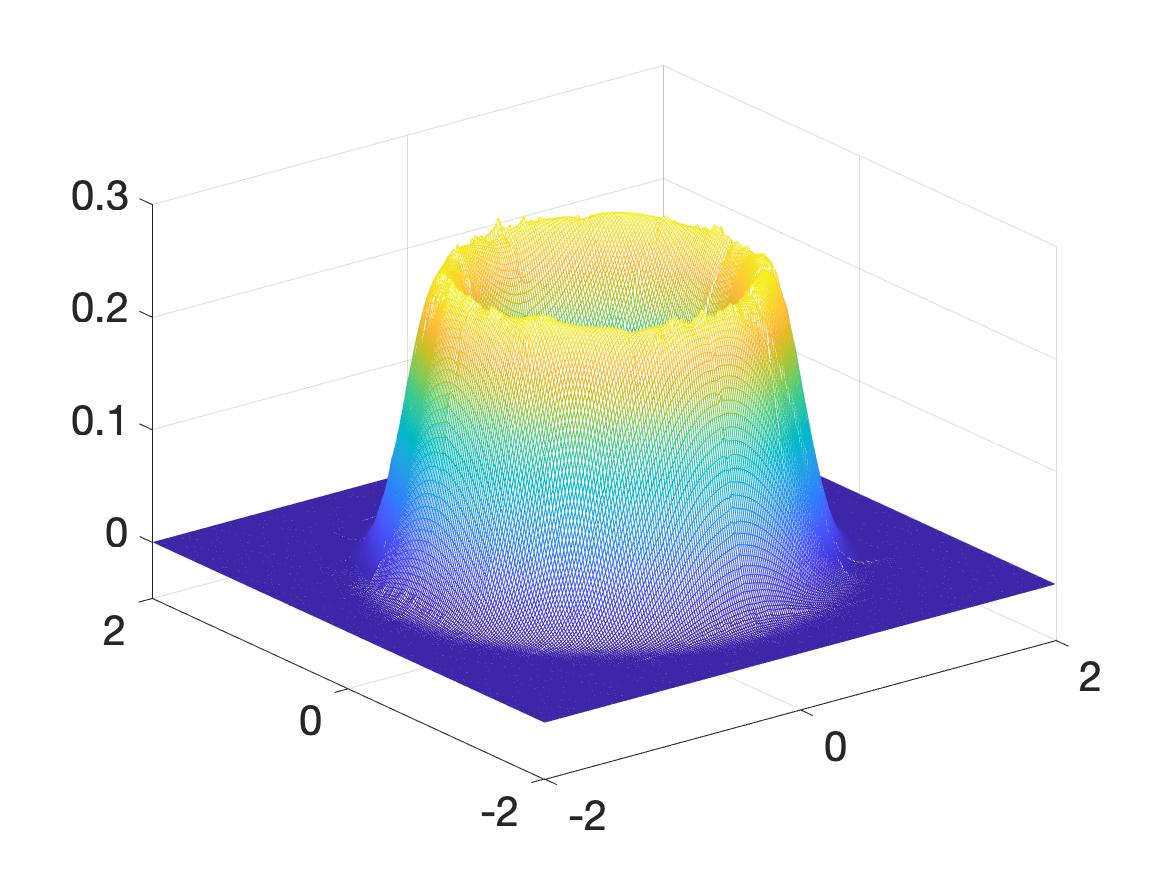}
	}
	\caption{\textbf{(Ring density)} The approximation computed by the basic block solver (\textbf{left}) and $1$-overlapping block solver (\textbf{right}) with $256\times256$ mesh points, $8\times8$ blocks and $10^7$ samples.}\label{ring_block}
\end{figure}

\begin{figure}
	\center{
		\includegraphics[width=0.55\textwidth]
		{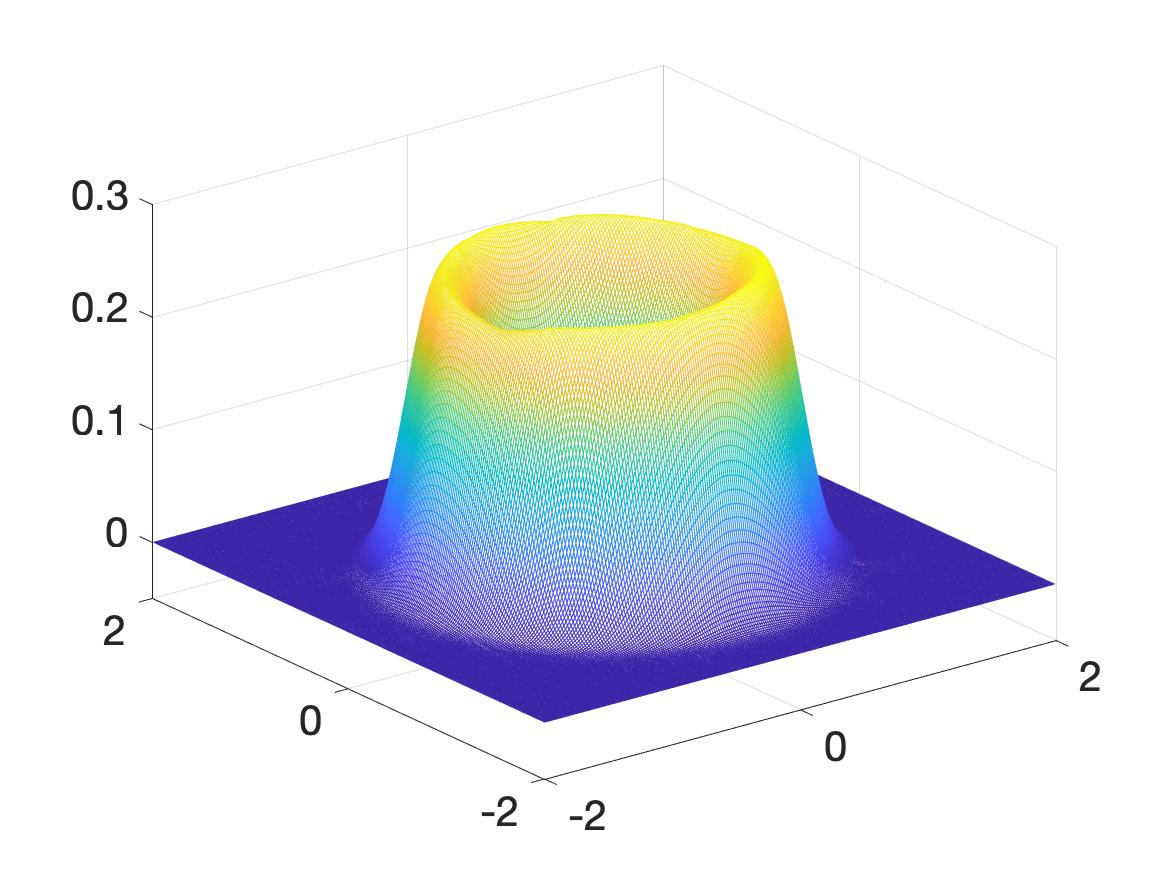}
	}
	\caption{\textbf{(Ring density)} The approximation computed by a triple iterated half-block shifting solver (\textbf{left}) with $256\times256$ mesh points, $8\times8$ blocks and $10^7$ samples.}\label{ring_HS_RG}
\end{figure}

Then we test the performance of block solvers, which is the theme of
the present paper. In addition, we need to compare the effect of two
error reduction methods proposed in Section \ref{Reducing Interface
  Error}. In the next a few figures, we still use a $256\times256$ mesh on the
domain $D=[-2,2]\times[-2,2]$, and $10^7$ samples to simulate the
reference data $\mathbf{v}$. We further divide $D$ into $8\times8$ blocks, each of which thus has $32\times32$ mesh points. The left figure
in Figure \ref{ring_block} is the approximation given by the naive
block solver described in Section \ref{Block Fokker-Planck Solver}. As
expected in Section \ref{Concentration of errors} and explained at the
beginning of Section \ref{Reducing Interface Error}, the error term of
the Monte-Carlo simulation data $\mathbf{v}$ (see Figure \ref{ring_data} 
(left)) is spread from the interior of each block to its boundary
because of the projection, which causes visible interface fluctuation.  

The next step is to implement two different error reduction methods introduced in Section \ref{Reducing Interface
  Error} and compare their performances. The right of Figure \ref{ring_block} shows the solution
given by overlapping blocks with $1$ layer of box overlap, that is,
$\iota=1$ (see Section \ref{Overlapping blocks}). We can see that the
interface fluctuation is reduced, especially at the places with high
probability density function and high interface fluctuation. Figure \ref{ring_HS_RG} is obtained 
by iterating the shifting block solver (see Section \ref{Shifting
  blocks}) for three repeats. The interface fluctuation is not only reduced,
but also smoothed significantly. 

Here we compare numerical solutions of the invariant probability
measure of equation \eqref{ring}, which is explicitly known. Figure \ref{Convergence}
shows a comparison of error terms for solutions obtained by different
error reduction methods, in both discrete $L^2(D)$
norm and discrete $H^{1}(D)$ norm. To make
a fair comparison, we let the number of samples change with the grid
size. Examples with mesh sizes $N = 64, 128, 256, 512, 1024$, and $2048$ are
tested and compared. The block size is $32\times 32$ in all tests. The
total number of Monte Carlo samples is chosen to be $390.625
N^{2}$. From Figure \ref{Convergence}, we can see that the $L^{2}$
error of the Monte Carlo data is stabilized as expected, because the
average sample count per box (and per grid) is constant.

The performance of two error reduction methods are compared in Figure
\ref{Convergence}. We can see that the plain block solver reduces the
error significantly compared to the Markov chain data, but the error does 
not seem to converge to zero. This
is not a surprise because all blocks are $32\times 32$, and Theorem
\ref{proj} says that the error should be proportional to $-1/2$ power
of the block size. Both error reduction methods reduces the
$L^{2}$ error from the plain block solver to some degree. The shifting
blocks method has better performance, but also a higher computational cost. We can see that the empirical rate of error decay for
  the shifting block method is roughly $N^{-1/2}$, which is better than the theoretical result in
  Theorem \ref{proj}. 

 First order derivatives in the discrete $H^{1}(D)$ norm are calculated
 by taking finite differences with respect to nearest grid
 points. With a constant mean sample size per box, the $H^{1}(D)$
 error of the reference solution from Monte Carlo data diverges when $N$ increases. This is because local fluctuations are roughly
  unchanged with the mesh size, while the grid size $h$ become
  smaller. Therefore, the derivative of the reference solution is
  $O(\zeta h^{-1})$, where $\zeta$ is the standard deviation of number
  of samples per box. In other words, all algorithms based on Monte
  Carlo simulations are expected to have poor performance in $H^{1}(D)$
  error. The divergence of $H^{1}(D)$ error is alleviated
  by the overlapping block method, and partial overturned
  by the shifting blocks method. As we see in Figure \ref{Convergence}
  Right, when $N = 2048$, the shifting block
  method gives a solution whose $H^{1}(D)$ error is $> 150$
  times less than that of the Monte Carlo data.

We can see that due
to the lack of interaction between blocks, the information of the
reference solution obtained by the Monte-Carlo simulation is not transferred to
a neighboring blocks.  So if a block is over-sampled, while the others
are under-sampled, then after the block solutions are pasted together,
the graph is not ``flat" at the places where it should be. We can see
that the shifting block method has better performance in terms of improving the
regularity. This is because it significantly increases interactions
between the neighbourhood blocks, and transfers the information between
neighborhood blocks. Applying the shifting block method repeatedly can
make the result more close to the global solver
or the exact solution. But it also incurs some extra computational
cost, as seen in Table \ref{cputime}.

\begin{figure}
	\center{
		\includegraphics[width=1.0\textwidth]
		{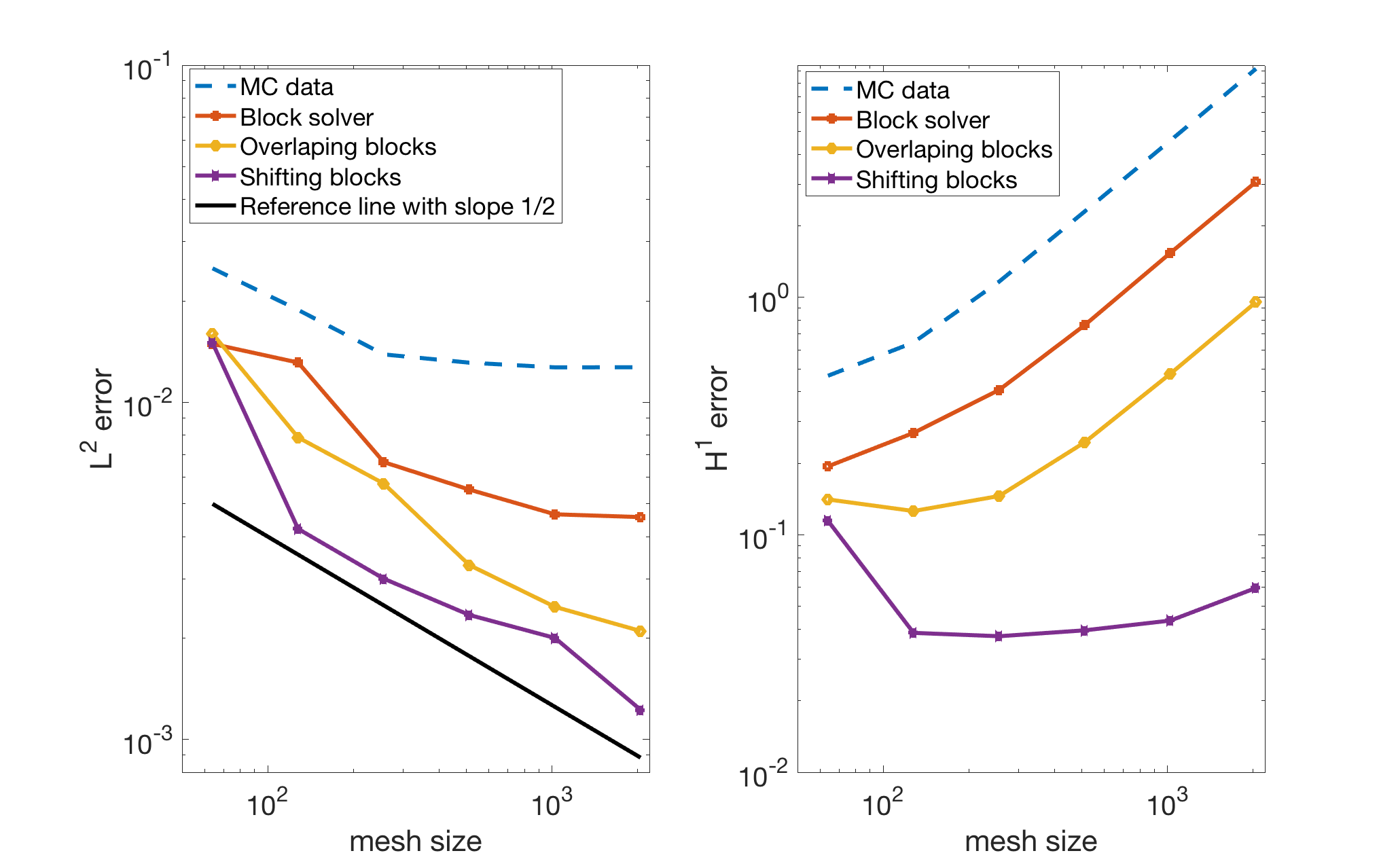}
	}
	\caption{\textbf{(Ring density)} Left: Discrete
          $L^2(D)$ error of solutions produced by different
          method. Right: Discrete $H^{1}$ error of solutions produced
          by different method.}\label{Convergence} 
\end{figure}

Finally, we show a comparison of computation time in Table
\ref{cputime}. In Table \ref{cputime}, ``Sampling'' means the Monte Carlo sampling time
(including a burn-in time, which is the waiting time before collecting
samples). ``Plain'' means the plain block solver proposed in Section 3. ``Overlapping'' means overlapping blocks method with $\iota
  = 1$ in Section 4.1. ``Shifting'' means the shifting blocks method in
  Section 4.2. We shift blocks twice by $1/3$ and $2/3$, and feed the
  new solution to the original solver as the reference
  solution.  And ``Old Version'' means the algorithm proposed in
  \cite{li2018data}, where no block is used. To make a fair comparison, no
  parallelizations or iterative linear solvers are used in this
  performance testing. We can see that the Monte Carlo sampling
  actually takes most of the time, and all versions of block-based
  solvers are very fast. When the mesh size is $2048$, the plain block
  solver is $>100$ times faster than solving a large optimization
  problem \eqref{opt} without dividing the domain.

\begin{table}[H]
\begin{tabular}[tb]{|c|c|c|c|c|c|c|}
\hline
Mesh&Sampling&Plain&Overlapping&Shifting& Old Version\\
\hline
64&0.5697&0.007316&0.009327&0.022512&0.017317\\
\hline
128&1.21302&0.026459&0.037354&0.109024&0.124361\\
\hline
256&3.73153&0.11852&0.159628&0.483537&0.8035\\
\hline
512&14.1032&0.416178&0.601545&1.92014&10.9225\\
\hline
1024&57.606&1.87628&2.56352&7.83214&61.5952\\
\hline
2048&319.075&6.5266&9.04282&31.7321&781.52\\
\hline
\end{tabular}
\vspace{2mm}
\caption{CPU time (in seconds) for different algorithms and mesh
  sizes.}
\label{cputime}
\end{table}

\subsection{Chaotic attractor}\label{Chaotic attractor}
In this subsection, we apply our solver to a non-trivial $3$D
example. Consider the Rossler oscillator with a small random perturbations
\begin{equation}\label{Rossler_eq}
\left\{
\begin{array}{l}
dx=(-y-z)\,dt + \varepsilon\,dW_t^x\\
dy=(x+ay)\,dt + \varepsilon\,dW_t^y\\
dz=\big(b + z(x-c)\big)\,dt + \varepsilon\,dW_t^z
\end{array}
\right.,
\end{equation}
where $a=0.2$, $b = 0.2$, $c = 5.7$ $\varepsilon = 0.1$, and $W_t^x$, $W_t^y$ and $W_t^z$
are independent Wiener processes.  This system is a representative
example of chaotic ODE systems appearing in many applications of physics, biology and
engineering. Figure \ref{Rossler3d} shows a trajectory in the
corresponding deterministic system and its projection onto the
$xy$-plane.  

\begin{figure}
	\center{
		\includegraphics[width=0.49\textwidth]
		{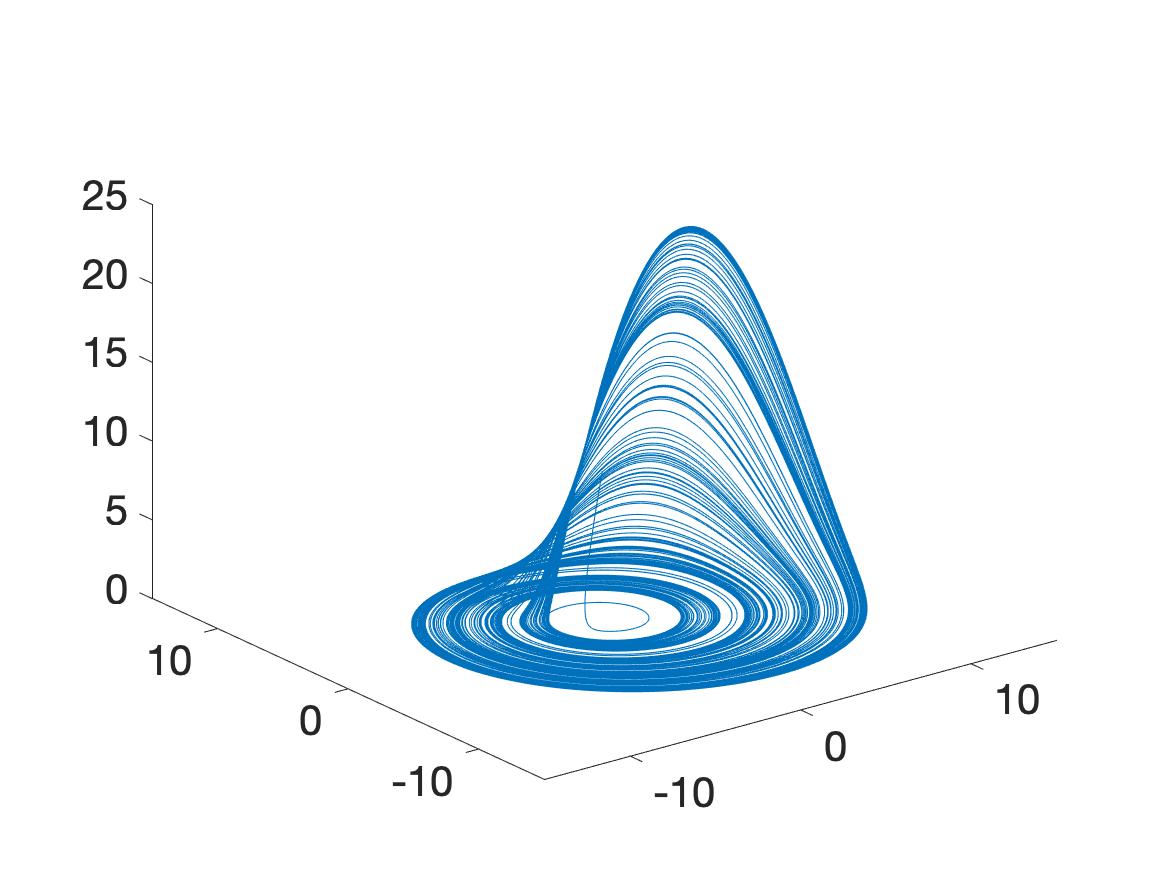}
		\includegraphics[width=0.49\textwidth]
		{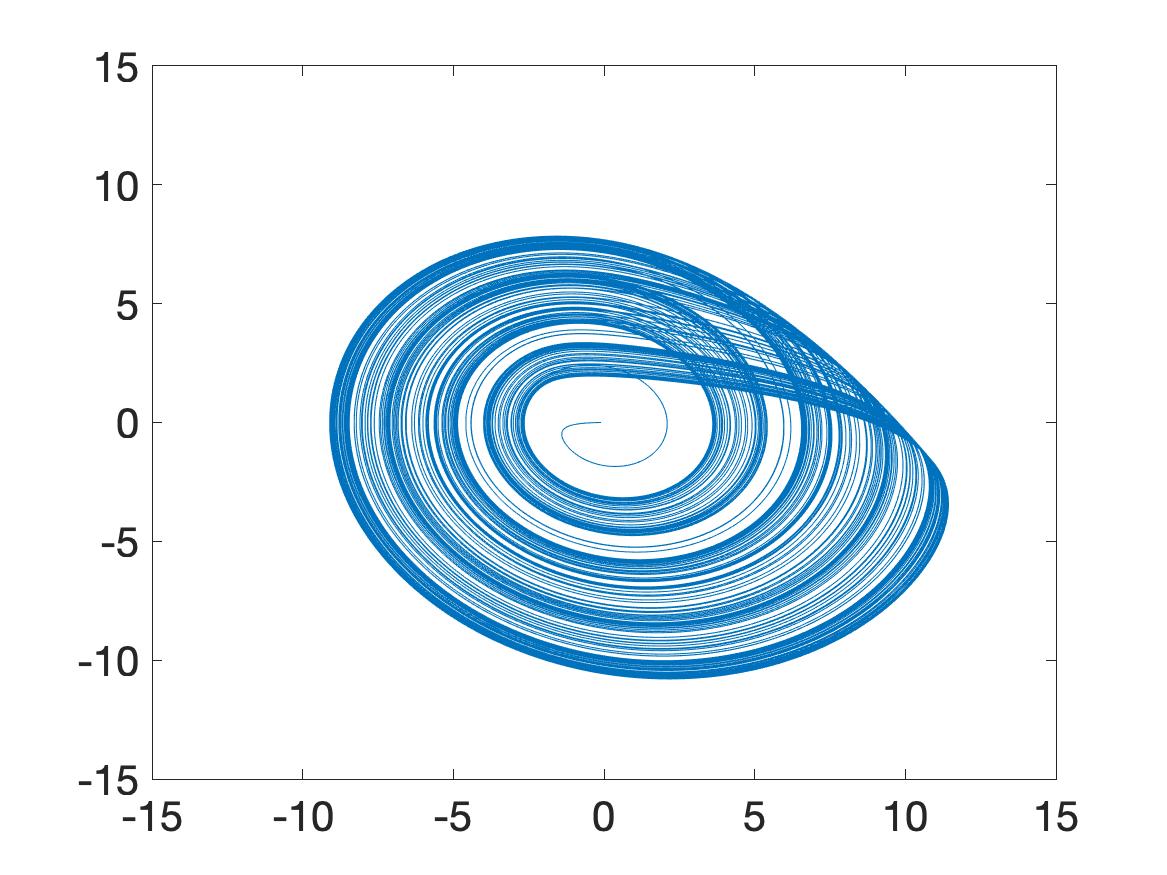}
	}
	\caption{\textbf{(Rossler)} A trajectory in the Rossler system
          \eqref{Rossler_eq} (\textbf{left}) and its projection on the
          $xy$-plane (\textbf{right}).
}\label{Rossler3d} 
\end{figure}

It is natural to imagine that the invariant density of
\eqref{Rossler_eq} has a similar shape to Figure \ref{Rossler3d}. We use the block solver together with $3$ repetitions of
the shifting blocks method on $D=[-15,15]\times[-15,15]\times[-1.5,1.5]$ with
$1024\times1024\times128$ mesh points. The grid is further divided
into $32\times 32\times 4$ blocks. The reference solution is generated
by a Monte Carlo simulation with $3.2\times10^{10}$ samples. Four
``slices'' of the solution, as seen in Figure \ref{Rossler}, are then projected to the $xy$-plane for the sake of
easier demonstration. Projection of the whole solution to the
$xy$-plane is shown in Figure \ref{Rossler_select}. In addition to the expected similar shape of the
distribution, we can see that many fine local structures of the
deterministic system are preserved by the invariant probability
measure. 

\begin{figure}
	\center{
		\includegraphics[width=\textwidth]
		{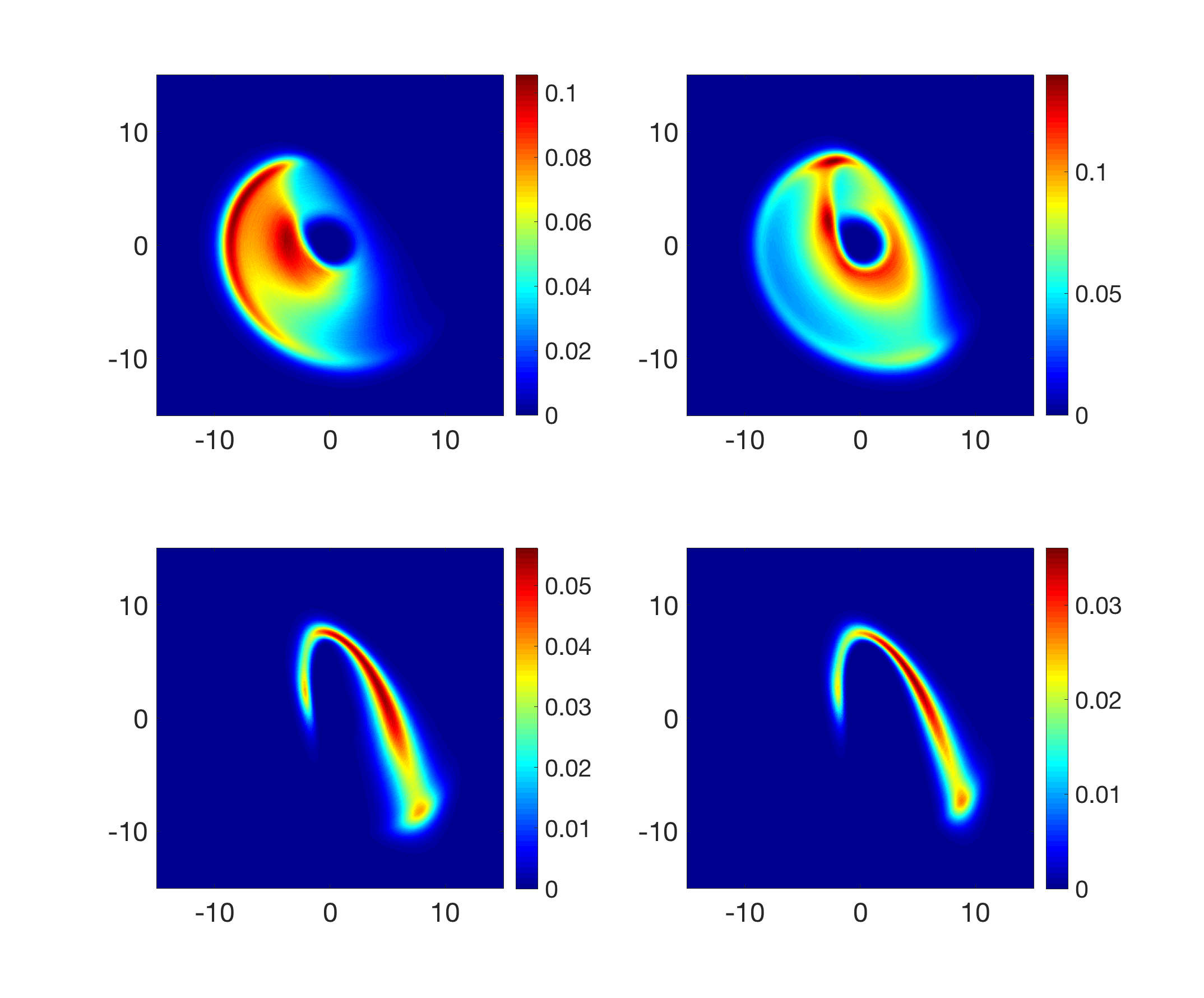}
	}
	\caption{\textbf{(Rossler)} Projections of $4$ ``slices''
          of the invariant density of the Rossler system \eqref{Rossler_eq} to the
          $xy$-plane. $z$-coordinates of $4$ slices are $[-0.09375,
          0.02344]$, $[0.02344 0.1406]$, $[0.1406, 0.2578]$, and
          $[0.2578, 0.375]$. The solution is obtained by a half-block shift
          solver on $[-15,15]\times[-15,15]\times[-1.5,2.25]$ with
          $1024\times1024\times128$ mesh points, $32\times32\times4$
          blocks, and $3.2 \times 10^{10}$ samples.}
\label{Rossler}
\end{figure}

\begin{figure}
	\center{
		\includegraphics[width=0.8\textwidth]
		{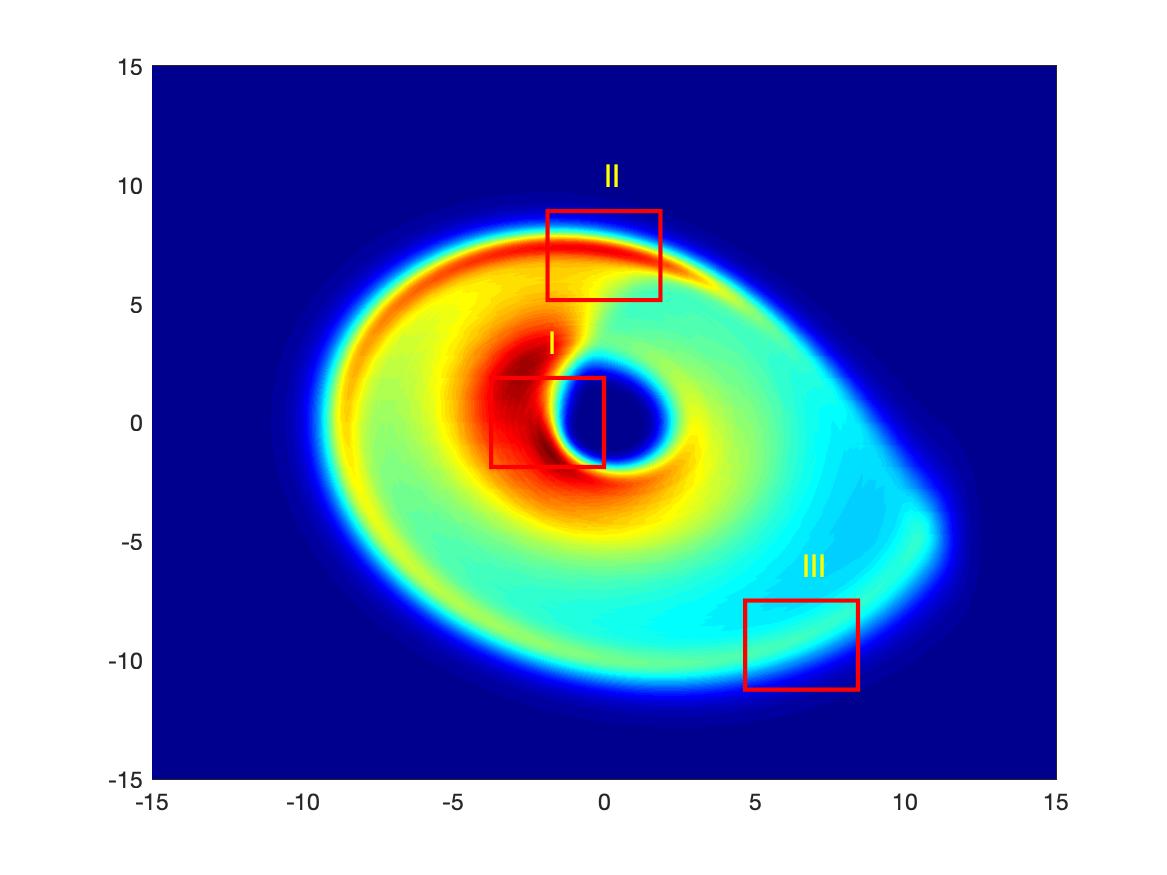}
	}
	\caption{\textbf{(Rossler)} The projection of the whole
          solution of the Rossler system \eqref{Rossler_eq} to the
          $xy$-plane. Three different local regions I-III (the red boxes) are used for
          comparing block solvers with different block sizes. See
          Figure \ref{Rossler_local} also.}
\label{Rossler_select}
\end{figure}

To demonstrate the performance of our algorithm, we apply the
data-driven solver without blocks to three local regions with
different characteristics (see 
Figure \ref{Rossler_select}). In each region, we use the data-driven
solver on $128\times128\times128$ mesh points without dividing the domain into blocks. 

In Region I, $[-15/4,0]\times[-15/8,15/8]$, the projection of the
solution has both dense and sparse parts that are clearly divided. In 
the first figure of \ref{Rossler_local}, we can see that a similar
resolution is preserved when using much smaller block sizes. Both
solutions provide high resolution to demonstrate the influence of strong chaos on the
invariant distribution. The only difference is the local solver with
smaller blocks has higher error on the left and bottom boundary,
because the half-shift method does not touch this part. The discrete
$L^2$ norm of the difference between the restriction of global
solution on Region I and the local solution is $\varepsilon_{\text{I}}
\approx 0.0032$. In Region II, $[-15/8,15/8]\times[165/32,285/32]$, the solution
includes an outer ``ring'' with high density. Outside this ``ring'', the density function decays quickly. We can see that both
solutions show the decay of the density around this ring. The discrete
$L^2$ norm of this difference between the two solutions in Region II
is $\varepsilon_{\text{II}} \approx 0.0028$. In Region III, $[75/16,135/16]\times[-45/4,15/2]$, the local solution
has much lower density. The local solver is still accurate when the entries of
$\textbf{v}$ are much smaller. The discrete
$L^2$ norm of the difference between the global solution and the local
solution in this region is $\varepsilon_{\text{III}} \approx
0.0018$. %and the maximum norm is $\varepsilon_{III,\max} \approx
         %0.0690$.  

Overall, the solution from the block solver has little difference from
the one obtained over a large mesh. And the
solver can provide desired resolution in both settings. Empirically,
we find that a block size of $30-35$ is a good balance of
performance and accuracy for most 2D and 3D problems.

\begin{figure}
	\center{
		\includegraphics[width=0.32\textwidth]
		{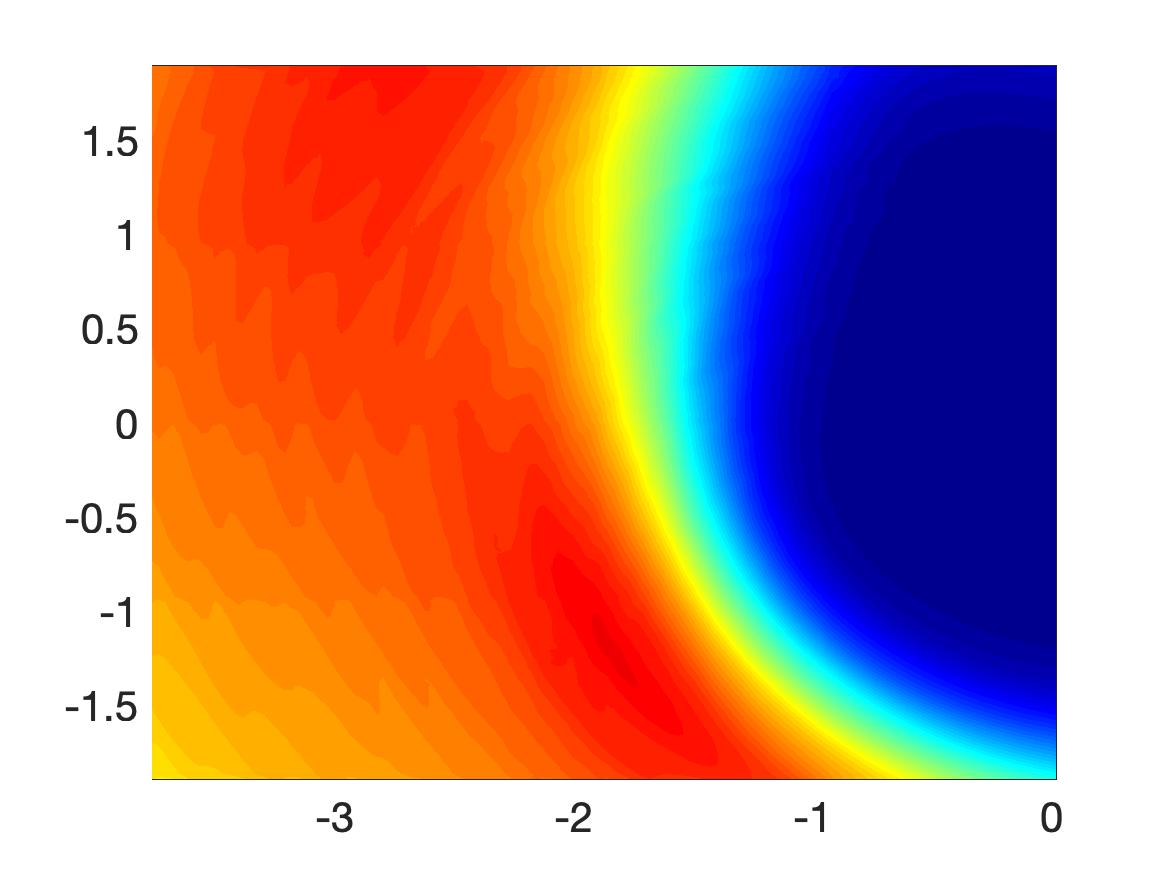}
		\includegraphics[width=0.32\textwidth]
		{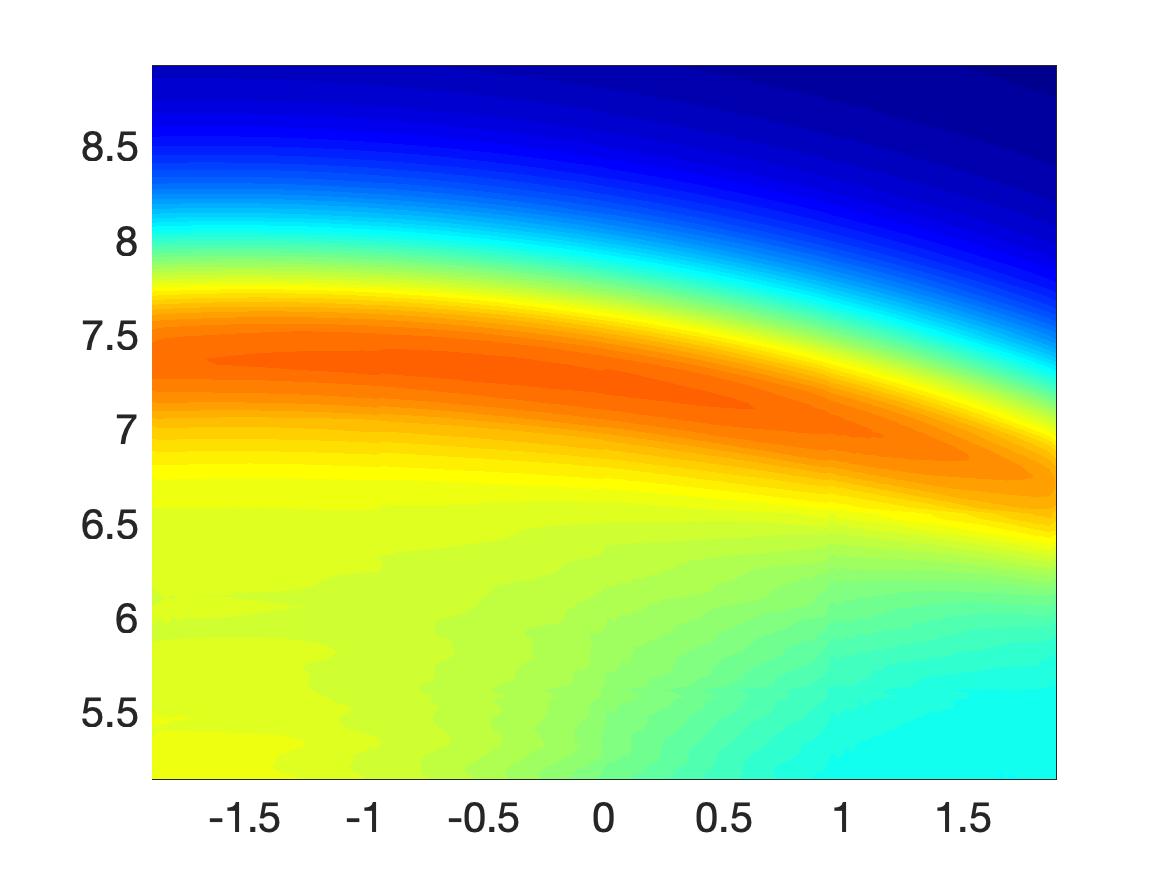}
		\includegraphics[width=0.32\textwidth]
		{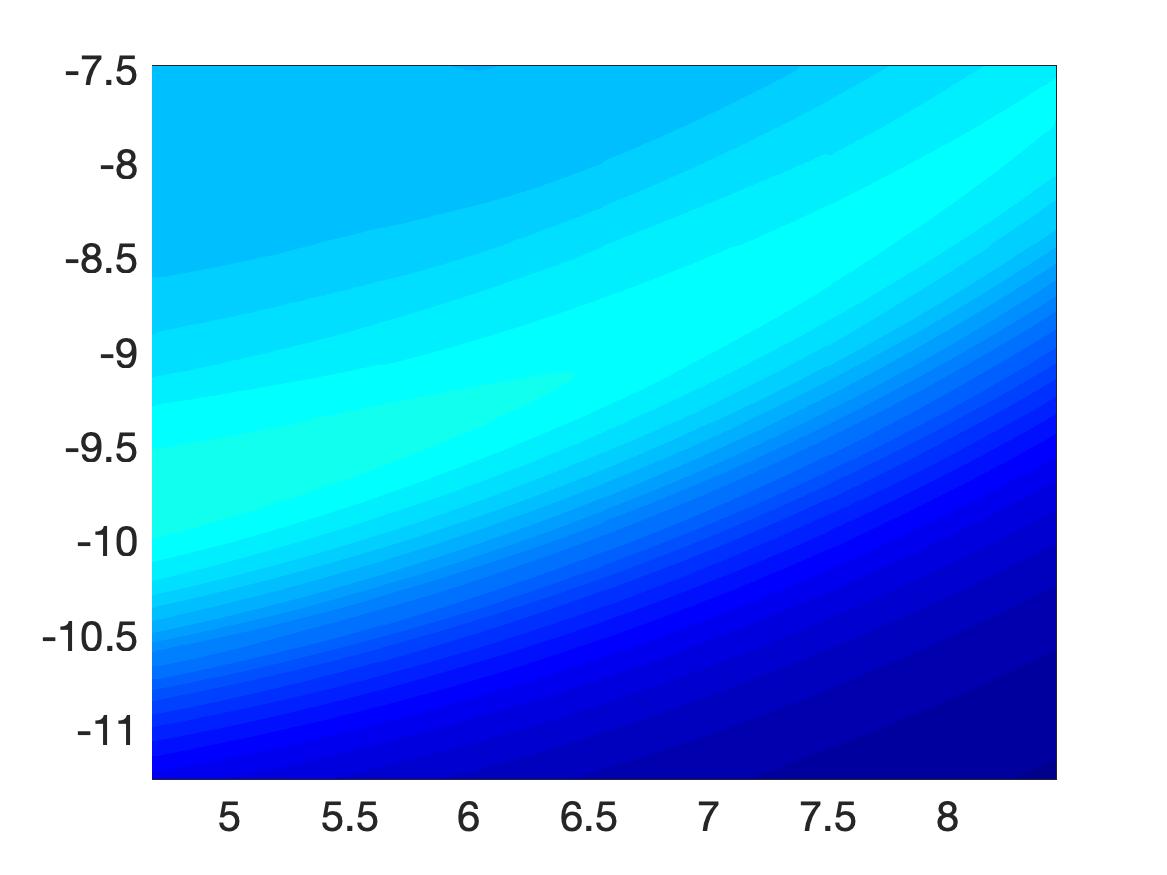}
		\includegraphics[width=0.32\textwidth]
		{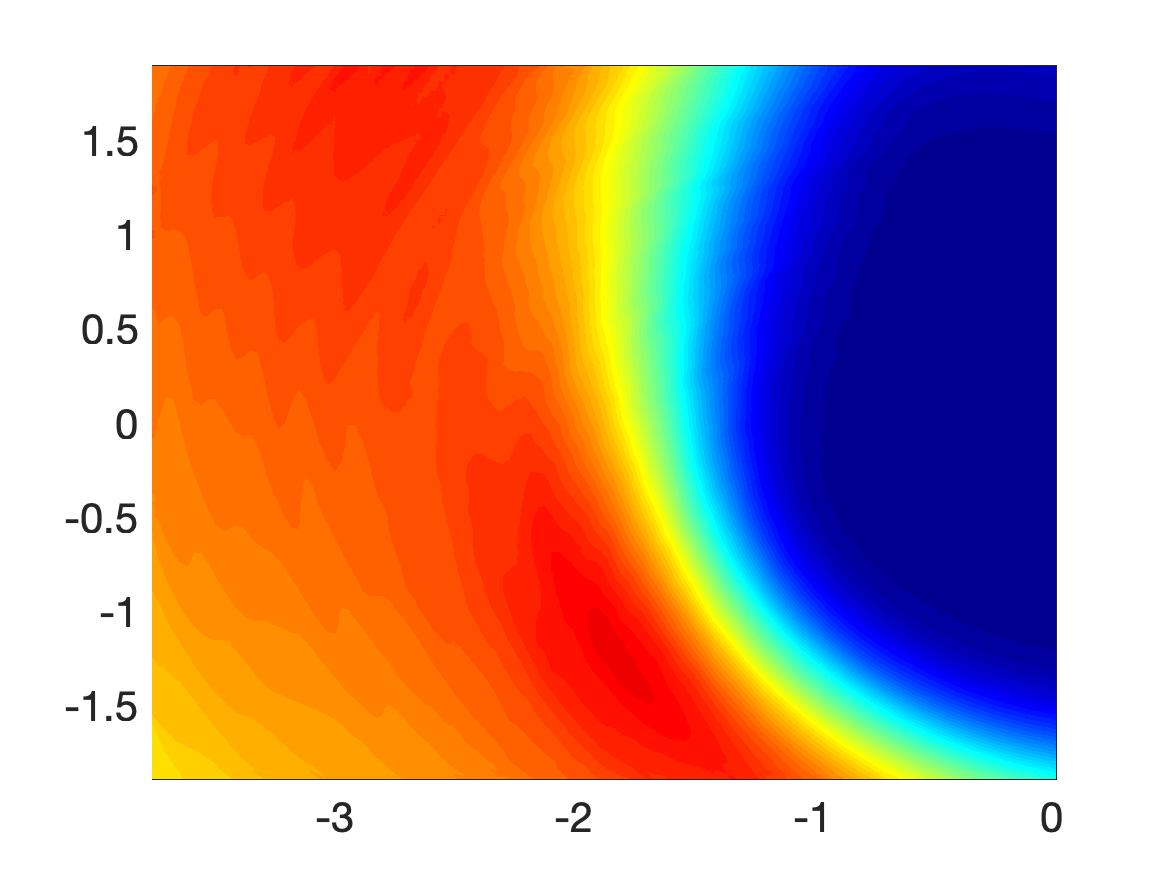}
		\includegraphics[width=0.32\textwidth]
		{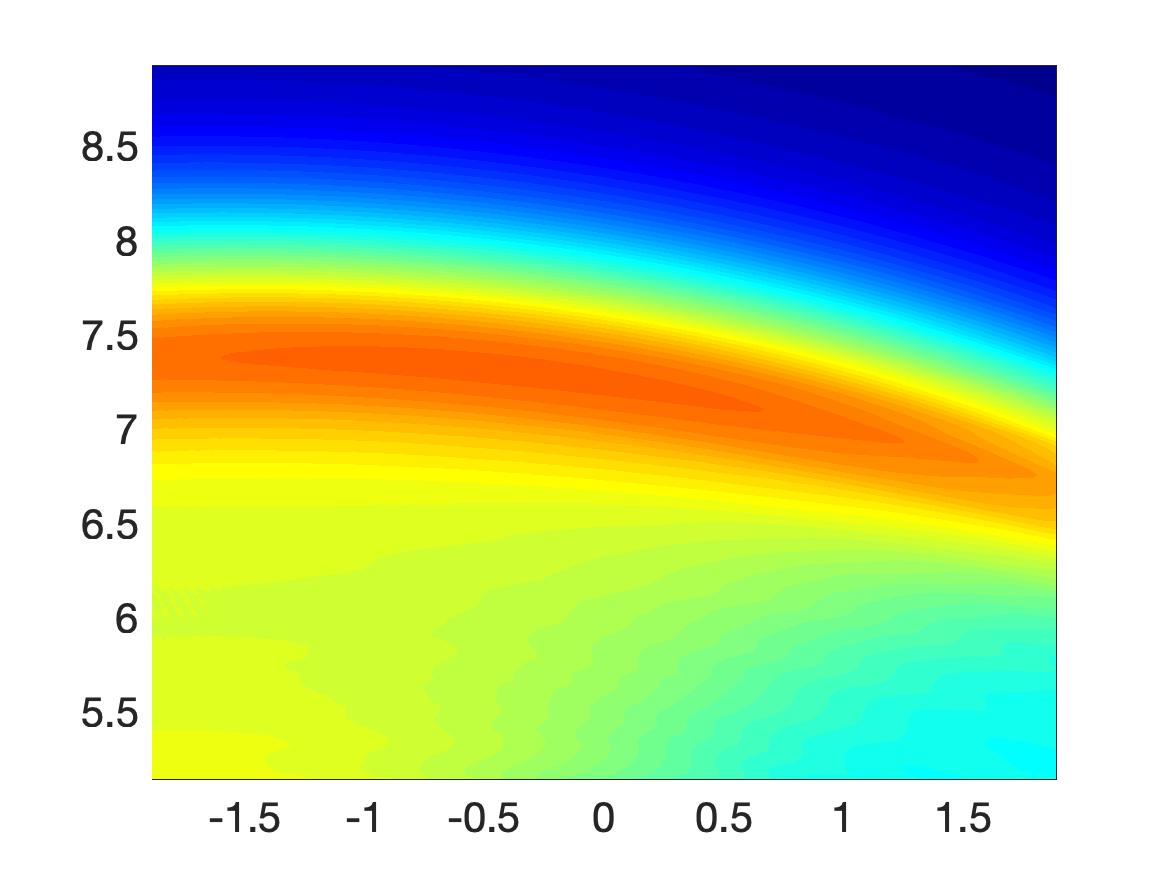}
		\includegraphics[width=0.32\textwidth]
		{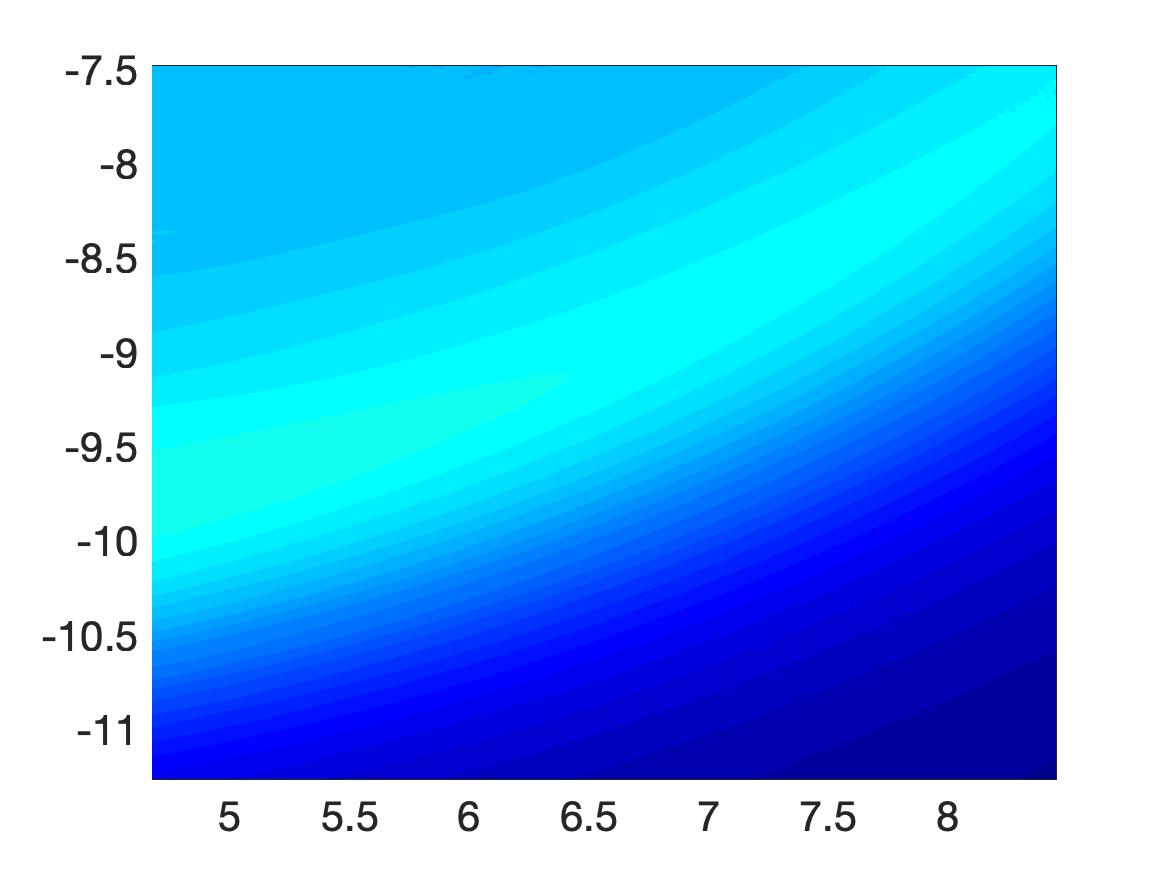}		
	}
	\caption{\textbf{(Rossler)} The local restrictions of the
          global solution in Region I--III (the first row), and the
          projections onto the $xy$-plane of solutions by the local
          solver in the three local regions (the second
          row). }\label{Rossler_local}
\end{figure}

\subsection{Mixed mode oscillation}\label{Mixed mode oscillation}

In this example, we consider another non-trivial $3$D system of mixed mode oscillation (MMO) with small random perturbations
\begin{equation}\label{MMO_eq}
\left\{
\begin{array}{l}
dx=\frac{1}{\eta}(y - x^2 - x^3)\,dt + \varepsilon\,dW_t^x\\
dy=(z-x)\,dt + \varepsilon\,dW_t^y\\
dz=(- \nu - ax - by - cz)\,dt + \varepsilon\,dW_t^z
\end{array}
\right.,
\end{equation}
where $\eta=0.01$, $\nu=0.0072168$, $a=-0.3872$, $b = -0.3251$, $c =
1.17$, and $W_t^x$, $W_t^y$ and $W_t^z$ are independent Wiener
processes. The strength of noise is chosen to be $\varepsilon=0.1$. Figure
\ref{MMO3d} provides one trajectory of the corresponding deterministic
system and its projection on the $xy$-plane. The deterministic part of
equation \eqref{MMO_eq} has a critical manifold $y = x^{2} + x^{3}$,
at which the derivative of the fast variable vanishes. We can see that
oscillations with different amplitudes occur near the fold of the
critical manifold, where the attracting and repelling sheet of the
critical manifold meet. This is called the mixed mode oscillation
(MMO) \cite{desroches2012mixed}. The mechanism of mixed mode oscillations is similar as
that of the canard explosion, which means the trajectory can follow
the unstable sheet of the critical manifold for some time \cite{guckenheimer2005canards}. It was
observed in \cite{li2018data} that the canard explosion can be destroyed by
a small random perturbation. This motivates us to explore the
characteristics of MMO under random perturbations. 

We again use the half-block shift solver with $2048\times512\times256$
mesh points, $64\times16\times8$ blocks and $10^9$ samples to get the
invariant measure on
$D=[-1.5,0.5]\times[-0.15,0.35]\times[-0.1,0.15]$. The numerical
result is still projected to the $xy$-plane (see Figure \ref{MMO}). We can see that
the invariant measure is mainly supported by the neighborhood of the
stable sheets of the critical manifold. Deterministic oscillations with small
amplitude are eliminated by the random perturbation. In other words,
similar to the canard explosion, MMO can not survive a small
random perturbation. The mechanism of this phenomenon is still
not clear. It is also not known how small the noise
should be in order to see MMO in equation \eqref{MMO_eq}.

To corroborate the performance of the solver on local regions, in this
example, we apply it to four `$z$-layers', that is, the region in the
phase space of the form $[-1.5,0.5]\times[-0.15,0.35]\times I$, where
$I=[-0.1,-0.1+d], [-0.05,-0.05+d], [0.05,0.05+d]$, and $[0.1,0.1+d]$
respectively with $d=1/32$. In each layer, we apply an iterated
shifting blocks solver with $2048\times512\times32$ mesh points,
$64\times16\times1$ blocks, and $10^9$ samples.  

Figure \ref{MMO_local} shows the invariant distribution in these four local
layers when projected to the $xy$-plane. We can see the invariant density
function on each $z$-layer. Similar as in Figure \ref{MMO}, most
invariant density concentrates at two stable sheets of the invariant
manifold, and no small amplitude oscillations can be seen from the
invariant probability density function.

\begin{figure}
	\center{
		\includegraphics[width=0.49\textwidth]
		{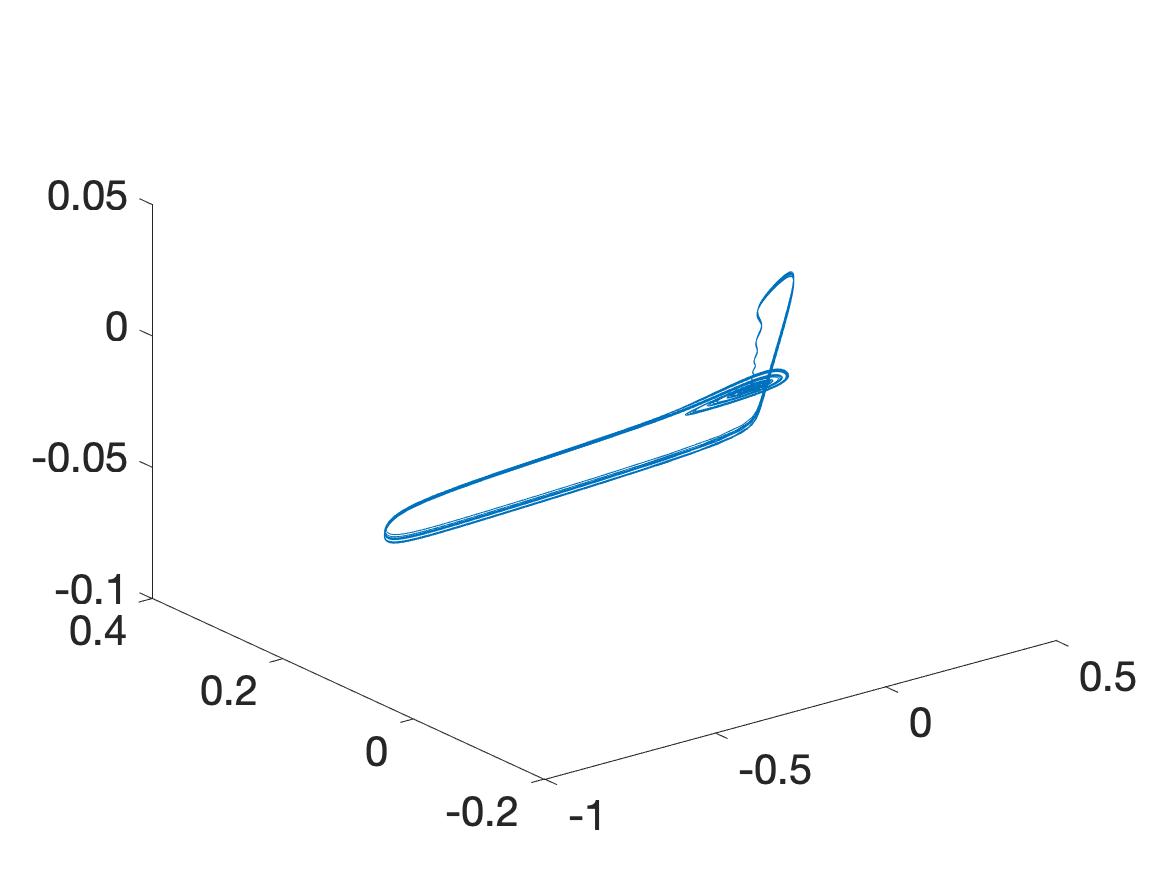}
		\includegraphics[width=0.49\textwidth]
		{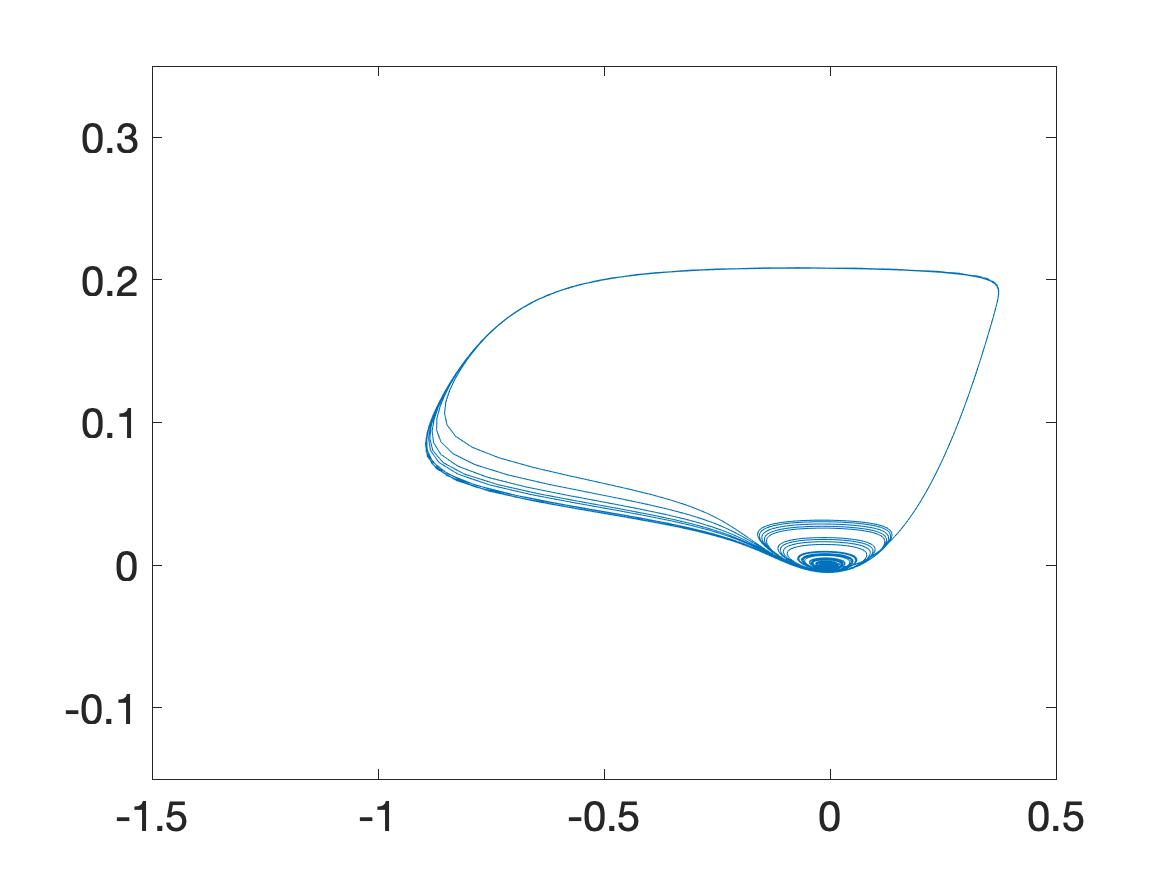}
	}
	\caption{\textbf{(MMO)} A trajectory in the system of mixed mode oscillation \eqref{MMO_eq} (\textbf{left}) and its projection on the $xy$-plane (\textbf{right}).}\label{MMO3d}
\end{figure}

\begin{figure}
	\center{
		\includegraphics[width=0.6\textwidth]
		{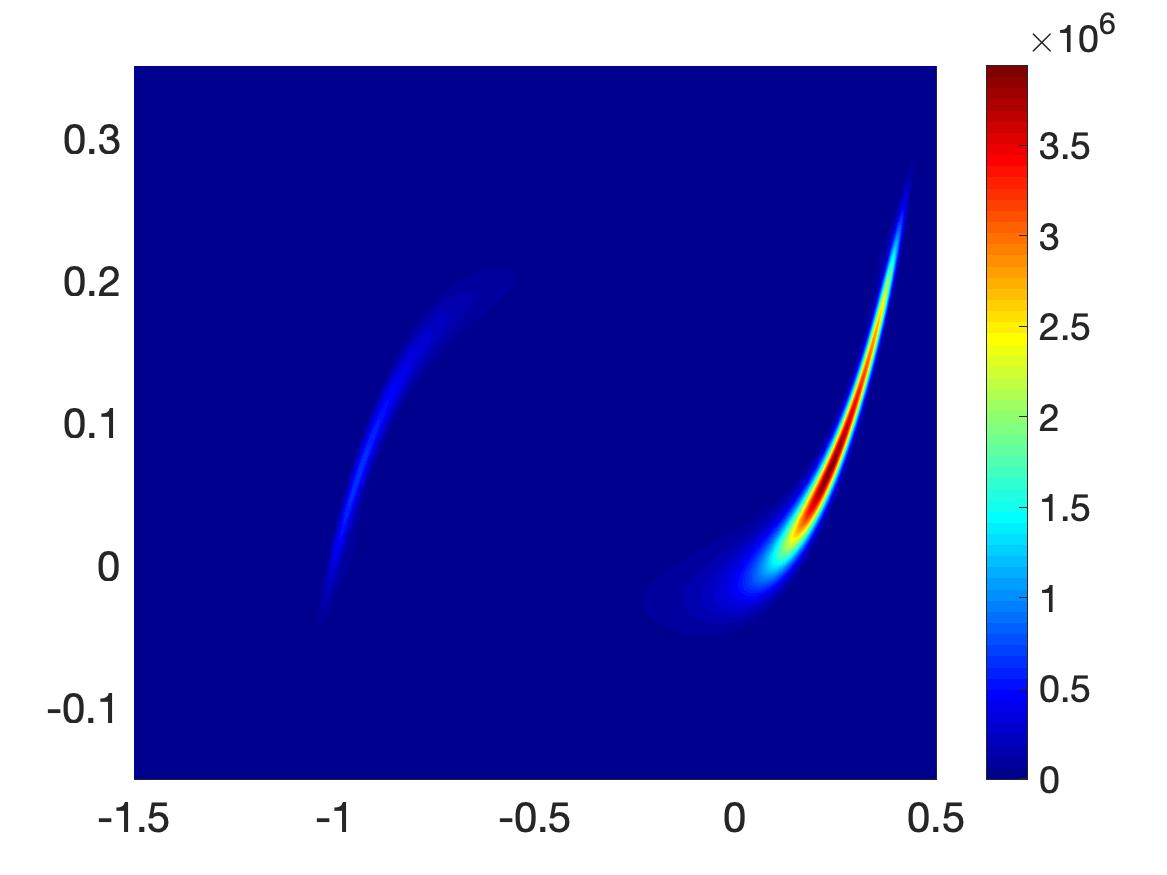}
	}
	\caption{\textbf{(MMO)} The projection of the invariant density of the system \eqref{MMO_eq} of mixed mode oscillation onto the $xy$-plane by a half-block shift solver on $[-1.5,0.5]\times[-0.15,0.35]\times[-0.1,0.15]$ with $2048\times512\times256$ mesh points, $32\times32\times32$ blocks and $10^9$ samples.}\label{MMO}
\end{figure}

\begin{figure}
	\center{
		\includegraphics[width=0.49\textwidth]
		{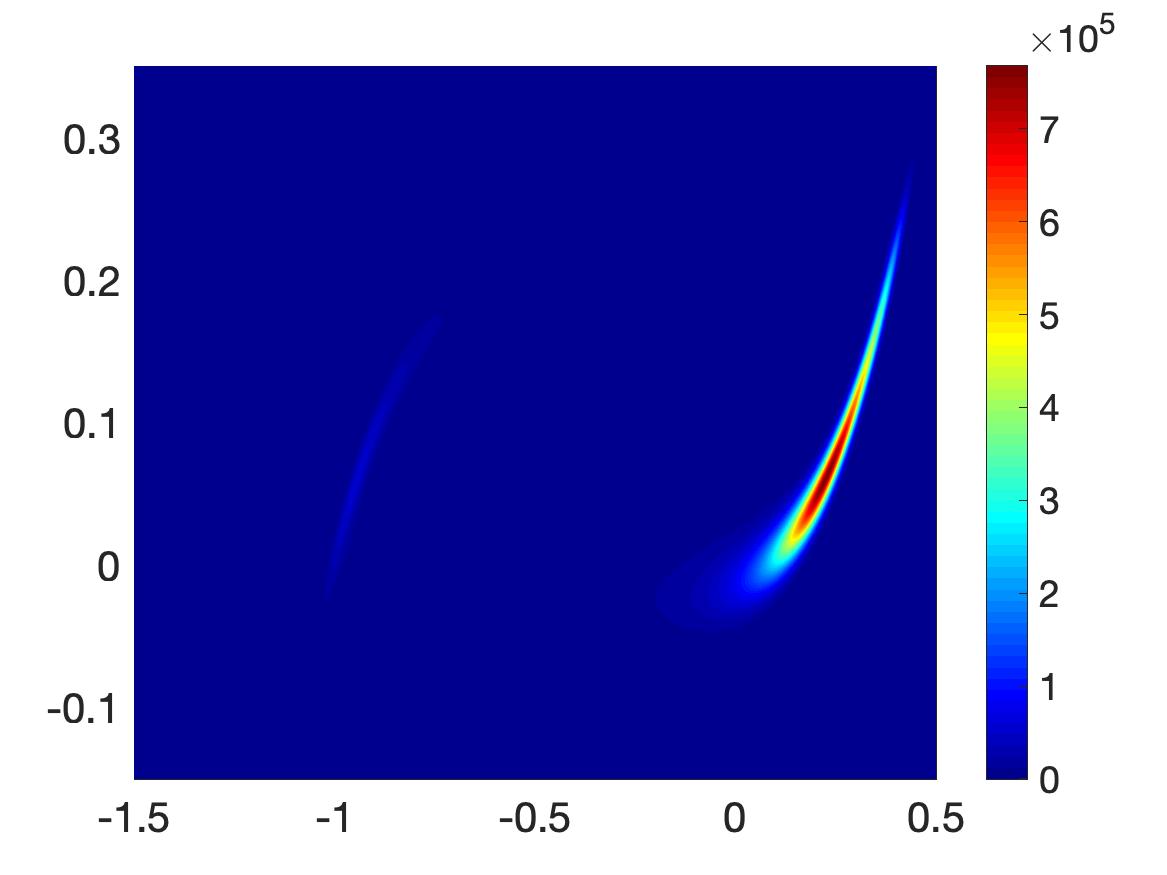}
		\includegraphics[width=0.49\textwidth]
		{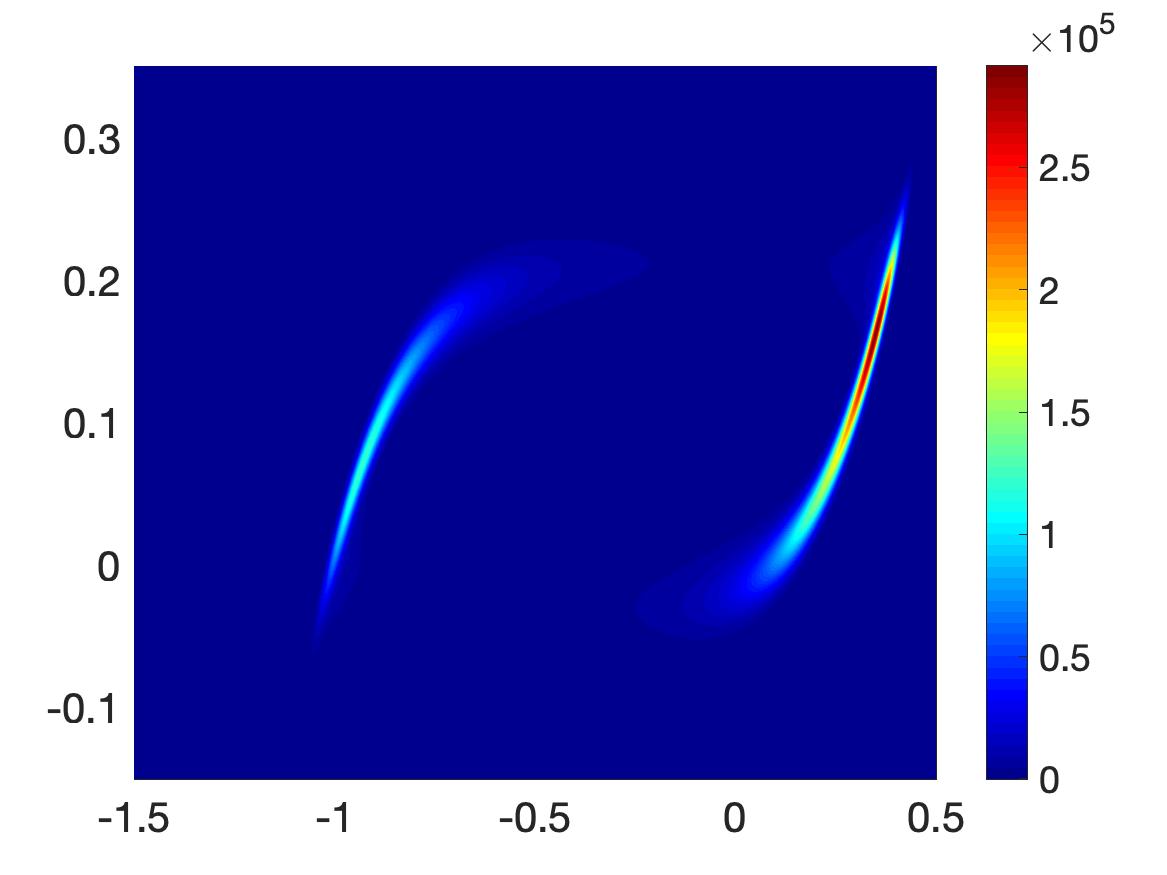}
		\includegraphics[width=0.49\textwidth]
		{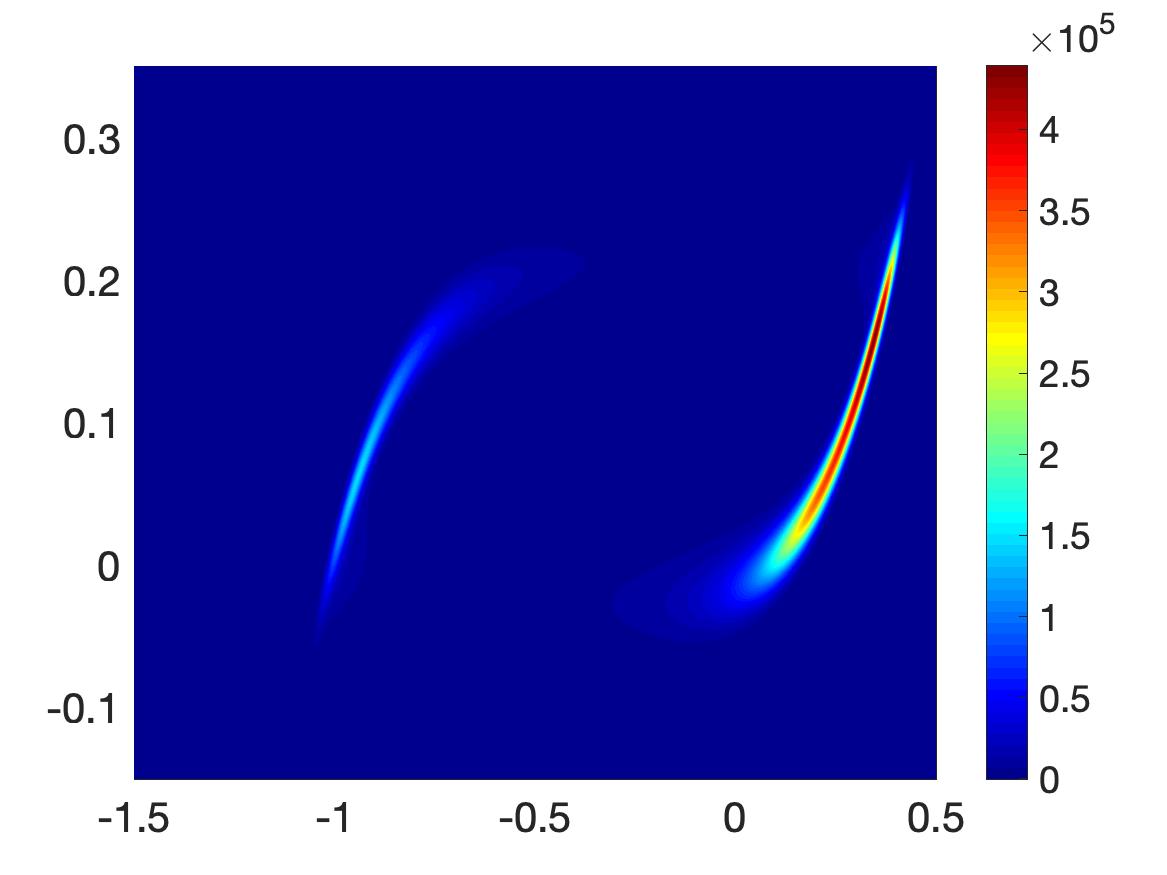}
		\includegraphics[width=0.49\textwidth]
		{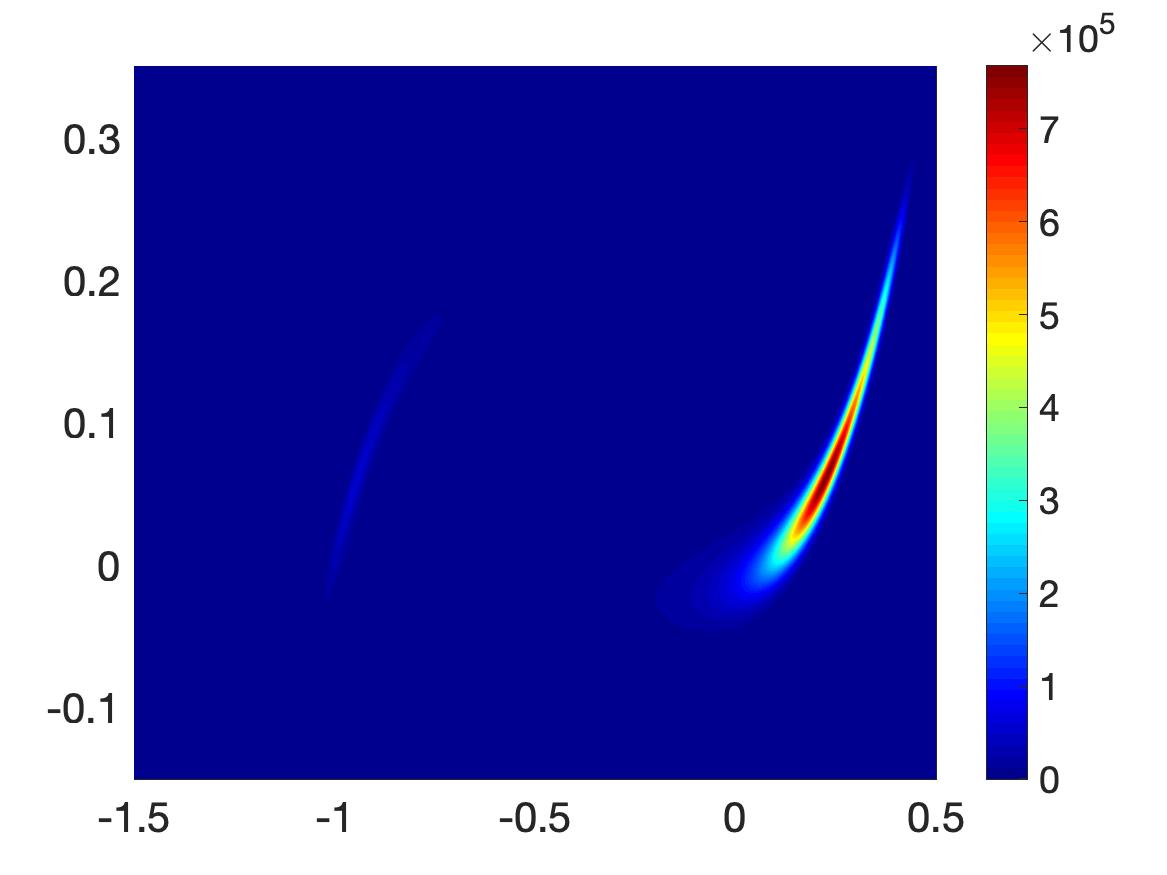}
	}
	\caption{\textbf{(MMO)} The projections onto the $xy$-plane of solutions by the local solver in the four $z$-layers).}\label{MMO_local}
\end{figure}

\section{Conclusion}
\label{conclusion}

A hybrid method for computing the invariant probability measure of the
Fokker-Planck equation was proposed in \cite{li2018data}. The key idea is
to generate a reference solution from Monte Carlo simulation to partially
replace the role of boundary conditions. In this paper, we rigorously
proved the convergence of this hybrid method. The concentration of
error is also investigated analytically and numerically. We found that the error
tends to concentrate on the boundary of the numerical domain,
which makes the empirical performance much better than our theoretical
result. Motivated by these results and the divide-and-conquer
strategy, we proposed a block version of this hybrid method. It
dramatically reduces the computational cost for problems up to
dimension 4. This method makes the computation of invariant
probability measures possible for many stochastic differential
equations arising in different fields, especially for researchers
with limited computing resources. Finally, to repair the interface
error appearing at the interface between adjacent
blocks, two different methods are proposed and tested with several
numerical examples.  

The block solver studied in this paper can be extended into several
directions. A natural extension is the time-dependent Fokker-Planck
equations. As discussed in \cite{li2018data}, one only needs to slightly
modify the optimization problem \eqref{opt} to solve a
time-dependent Fokker-Planck equation. This data-driven framework also
works for other PDEs with available data from stochastic simulations, such as
reaction-diffusion equations. It is well known that a chemical
reaction system with diffusions can be computed by both the stochastic simulation algorithm (SSA)
and the reaction-diffusion equation. This is similar to the case of
the Fokker-Planck equation. In addition, mesh-free version of this
block solver can be developed to solve higher dimensional
problems. Some high-dimensional sampling methods \cite{chen2017beating,
  chen2018efficient} can be adopted to improve the quality of
sampling.

% \yao{Any reference
 % for tensor decompositions??}

\bibliography{myref}{}
\bibliographystyle{amsplain}
\end{document}